\newcommand{\R}{\mathbb{R}}
\newcommand{\Sn}{\mathbb{ S}^{n-1}}
\newcommand{\Mat}{\operatorname{Mat}}
\newcommand{\Hom}{\operatorname{Hom}}
\newcommand{\RR}{\mathbb{R}}
\newcommand{\CC}{\mathbb{C}}
\newcommand{\HH}{\mathbb{H}}
\newcommand{\scf}{\mathbb{F}}
\newcommand{\Sp}{\mathrm{Sp}}
\newcommand{\GL}{\mathrm{GL}}
\newcommand{\SL}{\mathrm{SL}}
\newcommand{\U}{\mathrm{U}}
\newcommand{\Dens}{\operatorname{Dens}}
\newcommand{\Gr}{\operatorname{Gr}}
\newcommand{\vol}{\operatorname{vol}}
\newcommand{\wt}{\widetilde}
\newcommand{\largewedge}{\mbox{\Large $\wedge$}}
\newcommand{\Stab}{\operatorname{Stab}}
\newcommand{\St}{\operatorname{S}}
\newcommand{\vols}{\omega}
\newcommand{\volb}{\kappa}
\renewcommand{\b}{\overline}
\renewcommand{\Re}{\operatorname{Re}}
\def\cleardoublepage{\clearpage\if@twoside \ifodd\c@page\else
\hbox{}
\vspace*{\fill}
\begin{center}
\end{center}
\vspace{\fill}
\thispagestyle{empty}
\newpage
\if@twocolumn\hbox{}\newpage\fi\fi\fi}
\newtheorem{theorem}{Theorem}[section]
\newtheorem{lemma}[theorem]{Lemma}
\newtheorem{proposition}[theorem]{Proposition}
\newtheorem{corollary}[theorem]{Corollary}
\newtheorem{remark}[theorem]{Remark}
\newtheorem{conjecture}[theorem]{Conjecture}
\theoremstyle{definition}
\newtheorem{thmy}{Theorem}
\newcommand\blfootnote[1]{%
  \begingroup
  \renewcommand\thefootnote{}\footnote{#1}%
  \addtocounter{footnote}{-1}%
  \endgroup
}
\title{Complex and Quaternionic Analogues of Busemann's Random Simplex and Intersection Inequalities}
\author{Christos Saroglou and Thomas Wannerer} 
\begin{document}
\maketitle
\blfootnote{2020 Mathematics Subject Classification. Primary: 52A20, 52A40; Secondary: 52A22, 52A38.}
\blfootnote{Keywords. affine isoperimetric inequalities; dual Brunn-Minkowski theory; non-commutative determinants.}
\blfootnote{TW was supported by DFG grant WA 3510/3-1.}
\begin{abstract}{ In this paper, we extend two celebrated inequalities by Busemann---the random simplex inequality and the intersection inequality---to both complex and quaternionic vector spaces. Our proof leverages a monotonicity property under symmetrization with respect to complex or quaternionic hyperplanes. Notably, we demonstrate that the standard Steiner symmetrization, contrary to assertions in a paper by Grinberg, does not exhibit this monotonicity property.}
\end{abstract}
\section{Introduction}

The Busemann random simplex inequality provides a sharp lower bound on the expected volume of a random simplex  in $\RR^n$ formed by the origin and  $n$ vertices  sampled uniformly from convex bodies $K_1,\ldots, K_n\subseteq \RR^n$. It  is a  cornerstone of a beautiful  theory of affine isoperimetric inequalities for convex bodies.  From this inequality  several other important inequalities such as the Petty projection inequality, the Busemann--Petty centroid inequality, and the Busemann intersection inequality can be deduced. For further information, see the discussion  below and the books by Gardner \cite{Gardner:Tomography}, Schneider  \cite{Schneider:BM }, and Schneider--Weil \cite{S-W}. 

The  first main result of this paper  is an analogue of the Busemann random simplex inequality for complex and quaternionic vector spaces. By restricting the scalar field, these can be viewed as real vector spaces and hence  they possess convex sets in the usual sense. However, the complex and quaternionic structures give rise to  additional geometric objects, such as   complex or quaternionic subspaces. The underlying theme  of this paper is the interaction of  these geometric structures with classical convexity. 

  Throughout this paper,  we denote the scalar field by $\scf$,  which is allowed to be either the real numbers  $\RR$, the complex numbers  $\CC$, or the quaternions $\HH$.    We denote by $p$ the dimension of $\scf$ over $\R$ (hence $p\in\{1,2,4\}$) and by $\det(x_1,\dots,x_n)$ the determinant of the matrix with columns $x_1,\dots,x_n\in \scf^n$. Since we also consider matrices with quaternionic entries,  it is important to note that the term ``determinant"  always refers to the Dieudonn\'e determinant. The main properties of the Dieudonn\'e determinant are exposed in  Section~\ref{sec:det} below. 
A (real, complex, quaternionic)  ellipsoid in $\scf^n$ is by definition the image of the euclidean unit ball under an $\scf$-affine transformation. In what follows, we will write $\RR_+$ for the interval $[0,\infty)$.

\begin{theorem}\label{thm-BRS}
Let $\Phi\colon \RR_+\to \RR$ be a fixed strictly increasing  function.	For  convex bodies  $K_1,\ldots, K_n\subseteq \scf^n$, set
	$$\mathscr{B}(K_1,\ldots, K_n)=\int_{x_1\in K_1}\dots\int_{x_n\in K_n}\Phi(|\det(x_1,\dots,x_n)|)dx_1\cdots dx_n.$$ Then
	\begin{equation}\label{eq-thm-BRS}
		\mathscr{B}(K_1,\ldots, K_n)\geq \mathscr{B}(B_1,\ldots, B_n),    
	\end{equation}
	where $B_i$ is the euclidean ball with center at the origin and of volume equal to the volume of $K_i$. 
	
	Moreover, if the bodies $K_1,\ldots, K_n$ have non-empty interior, then equality holds in \eqref{eq-thm-BRS} if and only if the $K_i$ are homothetic  (real, complex, or quaternionic) ellipsoids centered at the origin. 
\end{theorem}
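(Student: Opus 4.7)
The plan, consistent with the abstract, is to establish monotonicity of $\mathscr{B}$ under symmetrization with respect to $\scf$-hyperplanes and then iterate to Euclidean balls. For an $\scf$-hyperplane $H\subset \scf^n$ with $\scf$-orthogonal complement line $L$ (a real $p$-dimensional subspace), define $S_HK$ for a convex body $K\subset\scf^n$ by replacing each fiber $K\cap(y+L)$, $y\in H$, with the $p$-dimensional Euclidean ball in $y+L$ centered at $y$ of the same $p$-volume as the original fiber. Standard arguments --- Fubini and Brunn's concavity principle applied fiberwise --- show that $S_H$ preserves volume and maps convex bodies to convex bodies.

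The heart of the argument is the slicewise monotonicity $\mathscr{B}(S_HK_1,\ldots,S_HK_n) \leq \mathscr{B}(K_1,\ldots,K_n)$. Fixing an $\scf$-unit vector $e\in L$ and writing $x_i=y_i+ec_i$ with $y_i\in H$ and $c_i\in\scf\cong L$, note that the $y_i$'s lie in the $(n-1)$-dimensional $\scf$-subspace $H$ and are therefore $\scf$-linearly dependent. In the real and complex cases, $\scf$-multilinearity of the determinant yields
\[
\det(y_1+ec_1,\ldots,y_n+ec_n)=\sum_{i=1}^n a_i(y_1,\ldots,y_n)\,c_i,
\]
with $a_i\in\scf$, since any expansion term with two columns in the $\scf$-line $L=\scf e$ vanishes and the term with zero $L$-columns vanishes by the dependence of the $y_i$'s. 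In the quaternionic case, the Dieudonn\'e determinant is not column-multilinear, but the identity $|\det|^2=\det_\CC$ under the complex embedding $\HH\hookrightarrow M_2(\CC)$ reduces the computation to a complex multilinear expansion, again yielding that the inner integrand is a rotation-invariant, radially increasing function $g$ of an $\scf$-linear combination $\sum a_ic_i\in\scf$.

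The required single-fiber rearrangement inequality
\[
\int_{A_1\times\cdots\times A_n} g\Bigl(\sum a_i c_i\Bigr)\,dc \;\geq\; \int_{A_1^*\times\cdots\times A_n^*} g\Bigl(\sum a_ic_i\Bigr)\,dc,
\]
where $A_i^*$ is the $p$-dimensional ball of the same volume as $A_i$, follows from the classical Brascamp--Lieb--Luttinger rearrangement inequality applied to the symmetric-decreasing truncations $N-\min(g,N)$ and then passed to the limit $N\to\infty$ by monotone convergence. Integrating the slicewise inequality against $dy_1\cdots dy_n$ gives the monotonicity of $\mathscr{B}$ under $S_H$. Iterating $S_{H_k}$ for a suitable sequence of $\scf$-hyperplanes and invoking Blaschke selection produces Hausdorff limits that are Euclidean balls of the prescribed volumes, and passage to the limit yields $\mathscr{B}(K_1,\ldots,K_n)\geq\mathscr{B}(B_1,\ldots,B_n)$.

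For the equality case, strict monotonicity of $\Phi$ makes the rearrangement step rigid and forces each fiber of $K_i$ to already be a $p$-ball centered at $y_i$ before symmetrization. Running this rigidity across sufficiently many $\scf$-hyperplanes and exploiting the invariance of $\mathscr{B}$ under simultaneous $\scf$-linear transformations of all $K_i$ (a consequence of the multiplicativity of the Dieudonn\'e determinant) identifies the extremal configurations as homothetic $\scf$-ellipsoids centered at the origin. The main obstacle I anticipate is the quaternionic version of the determinant expansion: because the Dieudonn\'e determinant is not column-multilinear, one must pass through the complex embedding and check that $|\det|^2$ as a function of the fiber coordinates $(c_1,\ldots,c_n)$ still has the required radial structure after the $2n$-column expansion of $\det_\CC$. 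A secondary difficulty is the equality analysis, where slicewise rigidity over varying $\scf$-hyperplanes must be combined with the $\scf$-affine invariance to extract the precise $\scf$-ellipsoidal shape.
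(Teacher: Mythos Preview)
Your overall plan---monotonicity of $\mathscr{B}$ under symmetrization in $\scf$-hyperplanes, then iteration to balls---is exactly the paper's strategy, and replacing the paper's direct fiberwise argument (a Steiner-plus-Brunn--Minkowski proof of Lemma~\ref{lemma-C_1}) by Brascamp--Lieb--Luttinger is a legitimate alternative. The real gap is your treatment of the quaternionic determinant. Passing to $\det_\CC$ via the embedding $\HH\hookrightarrow M_2(\CC)$ and expanding multilinearly in the two complex rows that encode the first quaternionic row only tells you that $|\det_\HH A|^2=\det_\CC\chi(A)$ is a Hermitian form in the complex coordinates $\gamma\in\CC^{2n}$ of $(c_1,\ldots,c_n)$. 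It does \emph{not} by itself give a function of a single $\HH$-linear combination $\sum c_i\lambda_i$; for that you would still have to prove that this Hermitian form has rank at most $2$, which is not a consequence of multilinearity. Without the rank bound the integrand is not of the form $g(L(c))$ for a linear $L$ into $\RR^4$, and BLL does not apply. The paper avoids this detour entirely by establishing a weak row expansion for the Dieudonn\'e determinant (Proposition~\ref{prop:row_exp}): Gaussian elimination on rows $2,\ldots,n$ produces $\lambda_1,\ldots,\lambda_n\in\HH$, depending only on those rows, with $|\det A|=|c_1\lambda_1+\cdots+c_n\lambda_n|$ and $[\lambda_i]=\det A(1,i)$. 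This is the missing technical lemma.

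Your equality sketch is also too thin. Two concrete ingredients are needed that you do not mention. First, once slicewise rigidity tells you that every $\scf$-line section of each $K_i$ is a Euclidean $p$-ball, you still need a theorem saying this forces $K_i$ to be an $\scf$-ellipsoid; the paper uses the Arocha--Bracho--Montejano characterization over $\CC$ (Theorem~\ref{thm-complex-ellipsoids}) and proves a new quaternionic analogue (Theorem~\ref{thm-quaternionic-ellipsoids}). Second, to show the ellipsoids are centered and homothetic, the paper needs control over how the coefficients $\lambda_i$ depend on the base points $y_j$: Lemma~\ref{lemma:row_exp} shows that for fixed $y_2,\ldots,y_n$ one can take $\lambda_1$ independent of $y_1$ while $\lambda_2,\ldots,\lambda_n$ are, up to a common right factor, linearly independent linear functions of $y_1$. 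Plugging this into the slicewise center relation $\sum z_i\lambda_i=0$ forces all centers to vanish. Your appeal to ``invariance plus rigidity across many hyperplanes'' does not substitute for either of these steps.
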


\begin{remark}\label{rem-BRS} As expected, the convexity can be dropped in Theorem~\ref{thm-BRS} without any significant changes in the proof (see \cite{Pfiefer:Max}). If one assumes $K_1,\ldots,K_n$ to be merely compact (or even just measurable) and of positive volume, then \eqref{eq-thm-BRS} is still true, with equality if and only if $K_1,\ldots, K_n$ are, up to sets of measure zero, homothetic ellipsoids centered at the origin. As a consequence, one concludes that convexity can be replaced with compactness in Theorem~\ref{thm-BI} below as well. Equality holds if and only if $K_1,\ldots, K_n$ are as in the statement of Theorem~\ref{thm-BI}, up to sets of measure zero.
\end{remark}

In the real case,  the above theorem is known as the Busemann random simplex inequality. Essentially, the determinant can be interpreted as $n!$ times the volume of the simplex formed by the origin and  $x_1,\ldots, x_n$.  The complex version of Theorem~\ref{thm-BRS} is discussed in Grinberg's work \cite{Grinberg:Isoperimetric}. However, there is an issue with the proof presented therein, as we will elaborate.

 The classical proof of the Busemann random simplex inequality is based on the property that Steiner symmetrization does not increase $\mathscr{B}$. More precisely, 
\begin{equation}\label{eq:counterex} \mathscr{B}(K_1,\ldots, K_n) \geq \mathscr{B}(\St_H K_1,\ldots, \St_H K_n)\end{equation} 
holds for every real  hyperplane $H\subseteq \RR^n$.  Grinberg's paper suggests that in the complex case the  same property holds with the same proof as in the real case. This is problematic, as it  contradicts the  characterization of the equality case as described in \cite[Theorem 11]{Grinberg:Isoperimetric}  and Theorem~\ref{thm-BRS}. 

Indeed, consider  $K_1=\cdots = K_n=E$, where $E$ is a complex  ellipsoid but not a Euclidean ball. In this scenario,  equality holds in \eqref{eq-thm-BRS}. However,  there exist real hyperplanes $H$ in $ \CC^n $ such that  $\St_H E$ is not a complex ellipsoid. Consequently,  the inequality \eqref{eq:counterex} cannot hold true  for such $H$. 

We will demonstrate, using a different argument than the one used in the real case, that \eqref{eq:counterex} holds for symmetrization in  \emph{complex} and \emph{quaternionic} hyperplanes.  In the quaternionic case, this requires first establishing a weak form of the Laplace expansion for the Dieudonn\'e determinant (Proposition~\ref{prop:row_exp}).

\bigskip

Our second main result is an analogue of the Busemann intersection inequality for complex or quaternionic vector spaces. 
In the following theorem, $\Gr_{m}(n,\scf)$ denotes the Grassmannian of $m$-dimensional $\scf$-linear subspaces of $\scf^n$ and $dE$ denotes integration with respect to the unique Haar probability measure on the Grassmannian.
Let $\volb_n$ denote the volume of the $n$-dimensional euclidean unit ball in $\RR^n$. 
The Lebesgue measure of a set $A\subseteq \RR^n$ is denoted by $|A|$. 

\begin{theorem} \label{thm-BI} Let $m\in \{1,\ldots, n-1\}$ and let 
		 $K_1,\ldots, K_m\subseteq \scf^n$ be convex bodies with non-empty interior.  If $p\in \{1,2,4\}$ denotes the dimension of $\scf$ over $\RR$, then
		\begin{equation}\label{eq:thm-BI} |K_1|\cdots |K_m| \geq \frac{(\kappa_{np})^m}{(\kappa_{mp})^n} \int_{\Gr_m(n,\scf)}  |K_1\cap E|^{n/m} \cdots |K_m\cap E|^{n/m}  dE,\end{equation}
		where $|K_i\cap E|$ denotes the Lebesgue measure of $K_i\cap E$  in $E$. 
		
		For $m=1$ equality holds if and only if the $K_i$ are invariant under multiplication by scalars of unit norm.
		If $m\geq 2$, then   equality holds in \eqref{eq:thm-BI} if and only if the $K_i$ are homothetic  (real, complex, or quaternionic) ellipsoids centered at the origin. 

\end{theorem}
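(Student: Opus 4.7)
The plan is to reduce \eqref{eq:thm-BI} to Theorem~\ref{thm-BRS} by fibering $(\scf^n)^m$ over $\Gr_m(n,\scf)$ via an $\scf$-analogue of the Blaschke--Petkantschin formula, paralleling the classical deduction of the real Busemann intersection inequality from the real random simplex inequality. Concretely, I would first establish that for every nonnegative measurable $f$ on $(\scf^n)^m$,
\begin{equation*}
\int_{(\scf^n)^m} f(x_1,\dots,x_m)\,dx_1\cdots dx_m = c_{n,m,p}\int_{\Gr_m(n,\scf)}\int_{E^m} f(x_1,\dots,x_m)\,|\det(x_1,\dots,x_m)|^{p(n-m)}\,dx_1\cdots dx_m\,dE,
\end{equation*}
where $\det(x_1,\dots,x_m)$ is the Dieudonn\'e determinant of the $m\times m$ matrix obtained by expressing $x_1,\dots,x_m\in E$ in any $\scf$-orthonormal basis of $E$ (the absolute value being basis-independent), and $c_{n,m,p}$ is an explicit dimensional constant. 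The exponent $p(n-m)$ arises because the Jacobian of the parametrization $(x_1,\dots,x_m)\leftrightarrow(E,(x_1,\dots,x_m)\in E^m)$ is the real determinant of the right-multiplication operator on $\Hom_\scf(E,E^\perp)$ attached to the coordinate matrix of $x_1,\dots,x_m$, and by the standard relation between the Dieudonn\'e determinant and the real determinant (cf.\ Section~\ref{sec:det}) this equals $|\det(x_1,\dots,x_m)|^{p(n-m)}$.

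Applying this identity to $f=\mathbf{1}_{K_1}\otimes\cdots\otimes\mathbf{1}_{K_m}$ yields $|K_1|\cdots|K_m|$ on the left and, on the right, the Grassmannian average of $\int_{(K_1\cap E)\times\cdots\times(K_m\cap E)}|\det(x_1,\dots,x_m)|^{p(n-m)}\,dx_1\cdots dx_m$. For each fixed $E$, Theorem~\ref{thm-BRS} applied inside $E\cong\scf^m$ with the strictly increasing $\Phi(t)=t^{p(n-m)}$ bounds this inner integral from below by its value when each $K_i\cap E$ is replaced by the Euclidean ball $B_i^E\subseteq E$ with $|B_i^E|=|K_i\cap E|$. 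A scaling argument then evaluates the ball integral as $I_{n,m,p}\,\kappa_{mp}^{-n}\prod_{i=1}^m|K_i\cap E|^{n/m}$, where $I_{n,m,p}$ is the corresponding integral over unit balls in $\scf^m$. The overall constant $c_{n,m,p}I_{n,m,p}\kappa_{mp}^{-n}$ is pinned down by specializing to $K_1=\cdots=K_m=B$, the Euclidean unit ball in $\scf^n$: Theorem~\ref{thm-BRS} is an equality for balls, the left-hand side is $\kappa_{np}^m$, and the Grassmannian integrand on the right is the constant $\kappa_{mp}^n$, forcing the overall constant to be $\kappa_{np}^m/\kappa_{mp}^n$.

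For the equality analysis, the chain collapses iff equality holds in Theorem~\ref{thm-BRS} on a.e.\ $E\in\Gr_m(n,\scf)$. If $m=1$, an $\scf$-ellipsoid in the one-dimensional $\scf$-space $E$ is just a Euclidean $p$-ball centered at the origin, and ``$K_1\cap E$ is such a ball for a.e.\ $\scf$-line $E$'' is equivalent, by continuity of the radial function of the convex body $K_1$, to $K_1$ being invariant under multiplication by scalars of unit norm. If $m\geq 2$, equality forces $K_1\cap E,\dots,K_m\cap E$ to be homothetic centered $\scf$-ellipsoids in $E$ for a.e.\ $E$; this first yields the unit-scalar invariance of each $K_i$ (as in the $m=1$ case), and then combining the a.e.\ $\scf$-ellipsoidal-section property with this invariance and a classical characterization of ellipsoids by their sections shows that each $K_i$ is itself a centered $\scf$-ellipsoid in $\scf^n$. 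Mutual homotheticity follows from the fibrewise homotheticity by comparing the underlying $\scf$-Hermitian forms on a dense family of sections.

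The principal technical hurdle is the quaternionic Blaschke--Petkantschin identity above, since the non-commutativity of $\HH$ forces all module structures and the Jacobian to be handled via the Dieudonn\'e determinant, and the naive real-case Jacobian computation must be carefully redone for quaternionic right modules. A secondary non-routine point is promoting ``a.e.\ $\scf$-ellipsoidal $m$-sections'' to ``$\scf$-ellipsoidal body'' in the $m\geq 2$ equality analysis, which should follow by combining the real section-characterization of ellipsoids with the automatic unit-scalar invariance of centered $\scf$-ellipsoids.
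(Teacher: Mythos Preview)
Your proposal is correct and follows essentially the same route as the paper: establish an $\scf$-linear Blaschke--Petkantschin formula with Jacobian $|\det(x_1,\dots,x_m)|^{p(n-m)}$ (this is the paper's Theorem~\ref{thm-BP}, proved there via Weyl's uniqueness theorem for relatively invariant integrals rather than a direct Jacobian computation), apply it to $\mathbf{1}_{K_1}\otimes\cdots\otimes\mathbf{1}_{K_m}$, and then invoke Theorem~\ref{thm-BRS} fibrewise with $\Phi(t)=t^{p(n-m)}$; the constant is fixed either by your specialization to balls or equivalently by the paper's Lemma~\ref{lemma:Bballs}. Your equality analysis also matches the paper's, with only a cosmetic difference in the $m\geq 2$ step: the paper observes directly that every real $2$-plane lies in some $E\in\Gr_m(n,\scf)$, so all real $2$-sections of $K_i$ are centered ellipses and hence $K_i$ is a centered real ellipsoid, whereupon the $\scf$-line sections force unit-scalar invariance and thus $\scf$-ellipsoidality---the same ingredients you list, in a slightly different order.
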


We remark that, due to a straightforward application of H\"older's inequality, Theorem \ref{thm-BI} in the case $K_1=\dots=K_m$ is in fact equivalent to the general case of Theorem \ref{thm-BI}.

The Busemann intersection inequality was initially proven by Busemann in  \cite{Busemann:Concurrent} for $m=n-1$ and later in \cite[Equation (9.4)]{BusemannStrauss:Area} for general $m$. Grinberg rediscovered the  general case  in \cite{Grinberg:Isoperimetric}. The paper also introduces  the complex version of the inequality. In all cases, the intersection inequality can be derived from the Busemann random simplex inequality via a useful identity  known as the linear Blaschke--Petkantchin formula.

As mentioned, Theorem~\ref{thm-BI} for $m\geq2$ can be formulated as 
\begin{equation}\label{eq:BI_same}|K|^m \geq \frac{(\kappa_{np})^m}{(\kappa_{mp})^n} \int_{\Gr_m(n,\scf)}  |K\cap E|^{n}  dE,\end{equation}
with equality if and only if $K$ is a centered ellipsoid. In the real case, the quantity on the right-hand side is called the $m$th \emph{dual affine quermassintegral} and is denoted by $\wt \Phi_m(K)$. It is well known that $\wt\Phi_m(K)$ is invariant under volume-preserving linear transformations. This was proved by Grinberg~\cite{Grinberg:Isoperimetric}, who also observed affine invariance over the complex numbers. We will reprove these results and establish the analogous property over the  quaternions in Section~\ref{sec:invariance}, along with invariance of the $m$th \emph{affine quermassintegral}
$$
\int_{\Gr_m(n,\scf)} |P_E K|^{-n} dE.$$
Here $P_E\colon \scf^n\to E$ denotes the orthogonal projection and $|P_E K|$ denotes the (euclidean) volume of $P_EK$ in $E$.  In fact, our proof will show how to construct these integrals using  only  a choice of Lebesgue measure on a vector space. 

The results of this work suggest to formulate a conjecture by Lutwak~\cite{Lutwak:affineQuermass}, which was recently confirmed by Milman--Yehudayoff \cite{MilmanYehudayoff:Affine}, also over the  complex numbers and  the quaternions: 

\begin{conjecture}\label{conj} For every convex body $K$ in $\scf^n$ with non-empty interior
	\begin{equation}\label{eq:conj} |K|^{-m} \geq  \frac{(\kappa_{mp})^n}{(\kappa_{np})^m} \int_{\Gr_m(n,\scf)} |P_E K|^{-n} dE \end{equation}
	with equality if and only if $K$ is a (real, complex, or quaternionic) ellipsoid.	
\end{conjecture}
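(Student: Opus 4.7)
My proof proposal would be to adapt the recent breakthrough of Milman--Yehudayoff~\cite{MilmanYehudayoff:Affine} (who settled the real case) to the $\scf$-linear setting, replacing real hyperplane symmetrizations by the complex or quaternionic hyperplane symmetrizations $\St_H$ that powered Theorems~\ref{thm-BRS} and~\ref{thm-BI}. As a first step, the $\scf$-linear invariance of $K \mapsto \int_{\Gr_m(n,\scf)}|P_E K|^{-n}\, dE$ established in Section~\ref{sec:invariance} renders the conjectured inequality $\GL_n(\scf)$-invariant (up to the volume factor), so the body $K$ may be freely normalized and, once the inequality is verified for the euclidean ball by direct computation, it follows automatically for all $\scf$-ellipsoids.

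The heart of the argument would be a monotonicity statement: for every $\scf$-hyperplane $H \subseteq \scf^n$,
$$ \int_{\Gr_m(n,\scf)} |P_E (\St_H K)|^{-n}\, dE \geq \int_{\Gr_m(n,\scf)} |P_E K|^{-n}\, dE. $$
Once this is in place, iterating $\St_H$ over an appropriate sequence of $\scf$-hyperplanes produces convex bodies converging in Hausdorff distance to the euclidean ball of the same volume as $K$, and since the ball saturates \eqref{eq:conj} and the left-hand side of \eqref{eq:conj} is continuous under this convergence, the inequality follows. The equality case would then be extracted by tracking when equality is possible in the monotonicity step; one expects this to force $K$ to be an $\scf$-affine image of the ball, exactly as in the proofs of Theorems~\ref{thm-BRS} and~\ref{thm-BI}.

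The principal obstacle is clearly the monotonicity statement. Unlike the dual affine quermassintegral in Theorem~\ref{thm-BI}, the affine quermassintegral admits no direct Blaschke--Petkantchin reduction to a random determinant, so the strategy employed for Theorem~\ref{thm-BI}---which was to compose the Blaschke--Petkantchin identity with the random simplex inequality of Theorem~\ref{thm-BRS}---cannot be transplanted verbatim. One tempting alternative route is through polarity: since $(P_E K)^\circ = K^\circ \cap E$ inside $E$, the conjecture superficially reads as a statement about sections of $K^\circ$ to which Theorem~\ref{thm-BI} might apply. However, polarity and volume do not interact cleanly without further normalization (the Blaschke--Santal\'o and Mahler quantities enter), which is precisely why Lutwak's conjecture resisted proof for decades in the real case. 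Adapting the Milman--Yehudayoff rearrangement---originally tailored to the real projection geometry---to the $\scf$-linear setting, and in particular handling the non-commutative nature of the Dieudonn\'e determinant in the quaternionic case, is therefore expected to be the main technical challenge.
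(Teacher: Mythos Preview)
The paper does \emph{not} prove this statement: it is explicitly labeled a conjecture and left open. The only case the paper settles is the very special one $m=1$ with $K$ invariant under multiplication by unit scalars (Proposition~\ref{prop:special}), where the claim reduces to the Blaschke--Santal\'o inequality. So there is no proof in the paper for you to compare against.

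As for your proposed strategy, it is a reasonable outline but not a proof, and you correctly identify the monotonicity step as the gap. Two remarks. First, the paper itself (in the paragraph following the conjecture) notes that the real Milman--Yehudayoff argument relies on monotonicity under \emph{real} Steiner symmetrization, and that this cannot hold in the complex or quaternionic setting because the extremizers must be $\scf$-ellipsoids rather than arbitrary real ellipsoids. Your variant replaces real hyperplanes by $\scf$-hyperplanes, which is the right move in spirit and is exactly what powered Theorems~\ref{thm-BRS} and~\ref{thm-BI}; however, nothing in the paper (or elsewhere, to our knowledge) establishes that the affine quermassintegral is monotone under $\St_H$ for $\scf$-hyperplanes $H$, and the Milman--Yehudayoff machinery does not transplant to codimension-$p$ symmetrization in any obvious way. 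Second, your polar-duality remark is apt but does not close the gap: the identity $(P_E K)^\circ = K^\circ \cap E$ turns the projection integral into a section integral of $K^\circ$, but the exponent $-n$ becomes $+n$ only after invoking a volume-product bound fiberwise, and this is exactly where Blaschke--Santal\'o and Mahler-type obstructions enter. In short, your proposal is a plausible program rather than a proof, and the conjecture remains open.
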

It is not difficult to see that  Conjecture~\ref{conj} is true in the specific case where $m=1$ and $K$ is the unit ball of a (complex or quaternionic) norm (Proposition~\ref{prop:special}). 
In the real case, Milman and Yehudayoff \cite{MilmanYehudayoff:Affine} demonstrated that the integral in \eqref{eq:conj} is monotone under  Steiner symmetrization.  This is not true in the complex and quaternionic cases, since ellipsoids are only extremizers if they are complex or quaternionic.
Thus a proof of Conjecture~\ref{conj} based on symmetrization will require a different method. 

As in the real case, the conjecture directly  implies the isoperimetric  inequalities 
\begin{equation}\label{eq:iso}\frac{\kappa_{np}}{\kappa_{mp}} \int_{\Gr_m(n,\scf)} |P_E K| dE \geq   |K|^{n/m}.\end{equation}
While these inequalities are highly compelling, they remain  open over the complex numbers and the quaternions.

\subsection{Relation to other work}

The intersection body of a convex body $K\subseteq \RR^n$  was introduced by Lutwak \cite{lutwak:intersection} as the star-shaped body $IK$ determined by 
$$ |IK\cap H^\perp| = |K\cap H|$$
for every linear hyperplane $H\subseteq \RR^n$. 
The Busemann intersection inequality \eqref{eq:BI_same} for $m=n-1$  yields a sharp upper bound for the volume of the intersection body. Replacing real by complex hyperplanes and requiring $IK$ to be invariant under multiplication by unit complex numbers, Koldobsky--Paouris--Zymonopoulou \cite{KPZ:IB} introduced complex intersection bodies.  This definition was latter extended by Dann--Zymonopoulou \cite{DZ:IB} to quaternionic vector spaces. As in the real case, the Busemann intersection inequality of Theorem~\ref{thm-BI} yields a sharp upper bound in the volume of the complex and quaternionic intersection bodies.  The concept of intersection body exists also within  the  $L_p$-Brunn--Minkowski theory, see, e.g., \cite{Berck,LZ:BS, HaberlLudwig:Intersection, GardnerGiannopoulos}. An $L_p$ version of the complex intersection body was recently introduced by Ellmeyer and Hofst\"{a}tter \cite{Ell-Hof}. 

The classical Busemann random simplex inequality is well known to translate for $\Phi(x)=x$  into a sharp inequality for the the volume of the centroid body of a convex body. 
Inspired by the work of Abardia and Bernig \cite{Aba,Aba-Be}, an analogue of the centroid body for complex vector spaces was proposed by Haberl \cite{Haberl}.  The volume of this body, however, does not seem to be related to the functional $\mathscr{B}$.  

The integrals in \eqref{eq:iso} arise in complex and quaternionic integral geometry as  important examples of unitarily invariant valuations, see \cite{Alesker:hl,BF:hig,BS:plane}.

\section{Preliminaries}\label{sec-prelim}

In this paper euclidean inner products are denoted by $\langle u,v\rangle$  and for the corresponding norms we write $\|v\|$.  
Following the standard reference \cite{S-W},  let $$\volb_n= \frac{\pi^{n/2}}{\Gamma(\frac n2 + 1)} $$
 denote the volume of the $n$-dimensional euclidean unit ball $B^n\subseteq \RR^n$ and we write $$\vols_{n}= n\volb_n= \frac{2\pi^\frac{n}{2}}{\Gamma(\frac n2)}$$
  for the surface area of  the $(n-1)$-dimensional sphere $S^{n-1}=\partial B^n$.

\subsection{Convex geometry}

Let $E$ be a $k$-dimensional linear subspace in $\RR^n$ and let $A\subseteq \RR^n$ be compact. The symmetrization of $A$ with respect to the subspace $E$ is defined as follows. For every $(n-k)$-dimensional affine subspace $F\perp E$ that meets $A$, consider the euclidean ball  $B_F\subseteq F$ (possibly a singleton) with center in $E$ and  volume equal to the volume of $A\cap F$. The union of all such balls is the symmetrization of $A$ with respect to $E$ and denoted by $\St_E A$, see, e.g., \cite[Section 9.2]{BZ}. For $k=n-1$ one obtains the Steiner symmetrization and for $k=1$ the Schwarz symmetrization.

Observe that $\St_E A$ can be obtained as a limit of Steiner symmetrizations. In particular, $\St_E A$ is convex if $A$ is convex.

A generalization of a remarkable theorem due to Klain \cite{Klain:Steiner} will be used subsequently.

\begin{theorem}[{Bianchi--Gardner--Gronchi \cite[Theorem 6.9]{BGG:Convergence}}]\label{Bianchi-Gardner-Gronchi}
	Let $i\in\{1,\dots, n-2\}$. Let, also, $E_1,\dots, E_m\subseteq \R^n$ be $i$-dimensional linear subspaces  such that $E_1^\perp+\dots +E_m^\perp=\R^n$,
 and that the set $\{E_1,\dots,E_m\}$ cannot be partitioned into two mutually orthogonal non-empty subsets. Let $\{F_i\}_{i=1}^\infty$ be a sequence from $\{E_1 ,\dots,E_m \}$, such that each $E_i$ appears infinite many times in the sequence. Then, for a convex body $C$, the sequence
	$\St_{F_i}\circ\dots\circ \St_{F_1}(C)$ converges in the Hausdorff metric to a euclidean ball.
\end{theorem}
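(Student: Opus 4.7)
The plan is to combine Blaschke selection with a strict monotone functional to force the iterated symmetrizations $K_k:=\St_{F_k}\circ\cdots\circ\St_{F_1}(C)$ to have Hausdorff subsequential limits that are jointly fixed by every $\St_{E_j}$; then to identify such joint fixed points with centered euclidean balls, and conclude that the entire sequence converges to the ball of volume equal to that of $C$.

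For uniform bounds, each $\St_E$ is a Hausdorff limit of Steiner symmetrizations about hyperplanes through $0$, so $\{K_k\}$ preserves volume and stays in a fixed ball. Define $M(K):=\int_K\|x\|^2\,dx$. Writing $x=y+z$ with $y\in E$, $z\in E^\perp$ (so $\|x\|^2=\|y\|^2+\|z\|^2$ as $0\in E$), Fubini gives
$$M(K)=\int_E\|y\|^2\,|K_y|\,dy+\int_E\int_{K_y}\|z\|^2\,dz\,dy,$$
where $K_y$ denotes the fiber of $K$ in $y+E^\perp$, viewed in $E^\perp$. The first integral is invariant under $\St_E$; the second strictly decreases unless each $K_y$ is already a euclidean ball centered at $0\in E^\perp$, since the centered ball minimizes $\int\|z\|^2\,dz$ over sets of given volume. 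Hence $M(\St_E K)\leq M(K)$ with equality iff $\St_E K=K$, and $\{M(K_k)\}$ is non-increasing and bounded below. By Blaschke selection, pass to a subsequence $K_{k_\ell}\to K_\infty$. For each fixed $j$, the pigeonhole principle produces a further subsequence with $F_{k_{\ell_s}+1}=E_j$; Hausdorff continuity of $\St_{E_j}$ then gives $K_{k_{\ell_s}+1}\to\St_{E_j}K_\infty$, which is itself a Hausdorff subsequential limit of $\{K_k\}$ and so has the same $M$-value as $K_\infty$. The strict equality case forces $\St_{E_j}K_\infty=K_\infty$ for every $j$.

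The main obstacle is showing that any such joint fixed point $K$ is a centered euclidean ball. Here the hypothesis $\dim E_j^\perp=n-i\geq 2$ is essential: $\St_{E_j}K=K$ means every fiber $K\cap(y+E_j^\perp)$, $y\in E_j$, is a euclidean disk of dimension $\geq 2$ centered at $y$, so $K$ is invariant under the subgroup $SO(E_j^\perp)\leq SO(n)$ acting as the identity on $E_j$. Let $G\leq SO(n)$ be generated by all these subgroups; then $K$ is $G$-invariant and the common fixed set $\bigcap_j E_j=(\sum_j E_j^\perp)^\perp=\{0\}$ is trivial. The non-partition hypothesis is exactly what is needed to force $G$ to act irreducibly on $\R^n$: any proper $G$-invariant orthogonal splitting $\R^n=V\oplus V^\perp$ would, via the way it interacts with each irreducible $SO(E_j^\perp)$-summand $E_j^\perp\subseteq\R^n$, yield a non-trivial partition of $\{E_1,\dots,E_m\}$ into two mutually orthogonal subsets, contradicting the hypothesis. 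Since a connected subgroup of $SO(n)$ acting irreducibly on $\R^n$ is transitive on each centered sphere, the $G$-invariant convex body $K$ must be a centered euclidean ball.

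All Hausdorff subsequential limits of $\{K_k\}$ are therefore centered euclidean balls of the same volume, hence equal, and the full sequence converges as claimed. I expect the irreducibility step to be the principal difficulty, since one must carefully track how a $G$-invariant orthogonal decomposition interacts with each pair $(E_j,E_j^\perp)$ to extract the forbidden bipartition; the other ingredients (monotonicity of $M$, Blaschke selection, and the transitivity consequence of irreducibility) are essentially standard.
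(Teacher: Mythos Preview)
The paper does not prove this theorem; it is quoted from \cite{BGG:Convergence} and used as a black box, so there is no argument here to compare against. Your outline, however, contains two genuine gaps. The pigeonhole step does not give what you need: from $K_{k_\ell}\to K_\infty$ it is \emph{not} true that for every $j$ one can pass to a further subsequence with $F_{k_{\ell_s}+1}=E_j$. With $m=2$ and $F_i=E_1,E_2,E_1,E_2,\ldots$, if every $k_\ell$ happens to be even then $F_{k_\ell+1}\equiv E_1$ and $E_2$ never appears. Pigeonhole only produces \emph{one} recurring $j$, so at best you have shown that $K_\infty$ is fixed by a single $\St_{E_{j_0}}$. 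Obtaining joint invariance under all $\St_{E_j}$ is precisely the delicate point in Klain's and Bianchi--Gardner--Gronchi's proofs, and it requires substantially more than a monotone functional plus Blaschke selection.

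Second, the assertion that a connected subgroup $G\le SO(n)$ acting irreducibly on $\R^n$ is transitive on centered spheres is false, and so is the weaker conclusion that every $G$-invariant convex body must then be a Euclidean ball. A counterexample is $SO(3)$ acting by conjugation on the $5$-dimensional space of traceless symmetric $3\times3$ real matrices: the action is irreducible and $SO(3)$ is connected, yet the operator-norm unit ball is $SO(3)$-invariant, convex, and not a Euclidean (Frobenius) ball. Hence even if you correctly establish irreducibility of $G=\langle SO(E_j^\perp):j\rangle$, you cannot deduce that the joint fixed body is a ball; one would have to argue directly that $G$ acts transitively on $S^{n-1}$ under the stated hypotheses, which is a different and harder statement than irreducibility.
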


To characterize when equality occurs in our inequalities, we will use the following  elementary characterization of euclidean balls.  We refer to \cite{DLL:Ellipsoids} or \cite[Lemma 2]{Groemer:OnSomeMeanValues}) for a related characterization of ellipsoids.

\begin{lemma}
\label{thm:ellipsoids} Let $K$ be a convex body in $\RR^n$, such that for each direction $u\in\Sn$, there is a hyperplane $H$ which is perpendicular to $u$ and $K$ is symmetric with respect to $H$. Then $K$ is a euclidean ball.
\end{lemma}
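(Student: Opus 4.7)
The plan is to reduce the problem to showing that $K$ is invariant under all reflections through hyperplanes passing through a fixed point, so that $K$ becomes invariant under the full orthogonal group around that point.

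For each $u \in S^{n-1}$ let $H_u$ denote the given hyperplane perpendicular to $u$ with respect to which $K$ is symmetric, and let $R_u$ be the orthogonal reflection across $H_u$. First I would show that all the $H_u$ share a common point. A convenient common point is the centroid $c(K)=\frac{1}{|K|}\int_K x\,dx$. Indeed, since centroids are equivariant under isometries and $R_u(K)=K$, we have $R_u(c(K))=c(R_u(K))=c(K)$, so $c(K)$ is a fixed point of $R_u$ and hence lies in $H_u$. This holds for every $u\in S^{n-1}$.

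After translating $K$ so that $c(K)$ is the origin, every $H_u$ contains the origin and is perpendicular to $u$, which forces $H_u=u^{\perp}$. Consequently $K$ is invariant under the reflection across $u^{\perp}$ for every $u\in S^{n-1}$.

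The last step is to invoke the standard fact (a direct consequence of the Cartan--Dieudonn\'e theorem) that the reflections through hyperplanes $u^{\perp}$, $u\in S^{n-1}$, generate the full orthogonal group $O(n)$. Thus $K$ is $O(n)$-invariant, and since $K$ is a convex body, it must be a euclidean ball centered at the origin. Translating back, $K$ is a euclidean ball centered at $c(K)$. There is no serious obstacle in this argument; the only point worth emphasizing is the use of centroid equivariance to pin down a common center for the symmetry hyperplanes, after which the conclusion is immediate from the generation of $O(n)$ by hyperplane reflections.
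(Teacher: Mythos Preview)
The paper does not actually supply a proof of this lemma; it is stated as an ``elementary characterization of euclidean balls'' and only a reference to related ellipsoid characterizations is given. Your argument is correct and is the natural one: the centroid, being equivariant under affine isometries, is fixed by each reflection $R_u$ and hence lies on every $H_u$; after translating the centroid to the origin each $H_u$ must equal $u^\perp$, and the Cartan--Dieudonn\'e theorem then yields full $O(n)$-invariance of $K$, forcing $K$ to be a ball. One minor remark: you need not worry about a possible non-uniqueness of $H_u$, since your centroid argument applies to \emph{any} hyperplane of symmetry perpendicular to $u$, and there is exactly one hyperplane perpendicular to $u$ through a given point.
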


\subsection{Linear algebra over non-commutative fields}\label{sec:linAlg}

The basic concepts of linear algebra extend with almost no change to the situation where the  field is not necessarily commutative.  In this paper, by a vector space $V$ over a division ring $k$ we will always understand a right vector space over $k$. This is  a  group $(V,+)$ together with a scalar multiplication 
$$ \cdot \colon V\times k\to V $$ 
such that  the properties 
\begin{gather*}
	(v\cdot a)\cdot b= v\cdot (ab), \quad (v+w)\cdot a= v\cdot a+ w\cdot a,\\
	v\cdot (a+b)= v\cdot a+ v\cdot b,\quad v\cdot 1 = v
\end{gather*} 
are satisfied. The definition and properties of subspaces, span, linear independence, basis, and dimension carry over without change to this more general setting, see, e.g., \cite[Chapter 1]{Artin}. A linear map $f\colon V\to W$ is defined as in the commutative setting. If $v_1,\ldots, v_n$ is a basis of $V$ and $w_1,\ldots, w_m$ is a basis of $W$, then the matrix $A=(a_{ij})\in \Mat(m\times n,k)$
is defined by 
$$ f(v_j)= \sum_{r=1}^m  w_j\cdot a_{rj},\quad j=1,\ldots,n.$$ Here, $\Mat(m\times n,k)$ stands for the set of $m\times n$ matrices with entries from $k$. We also abbreviate $\Mat(n,k)=\Mat(n\times n,k)$. 
Observe that if $v\in V$ has the coordinates $X=(x_1,\ldots, x_n)^t$, then $f(v)$ has the coordinates $AX$. 
If $g\colon U\to V$ is another linear map with matrix $B=(b_{kl})$, then 
the matrix of $f\circ g$ is the product of the matrices 
$AB=(\sum_{r=1}^n a_{ir}b_{rl})$, as usual.

An important difference with the commutative case that arises already at this elementary level is that the set $\Hom(V,W)$ of  linear maps  fails in general to be a vector space over $k$. Also the definition of the determinant requires a modification as we will discuss in the next subsection.

In this article, we are ultimately interested in  only in the  division rings of the reals, the complex numbers, and the quaternions. We use the letter $\scf$ to denote either $\R$,  $\CC$, or $\HH$. Recall that every $x\in \HH$ can be uniquely expressed as 
$$ x= x_0 + x_1 i + x_2 j + x_3 k,$$
where  $x_0,x_1,x_2,x_3\in\RR$ and 
$$ i^2= j^2=k^2 = ijk=-1.$$
Conjugation in $\HH$ is defined by $\b x= x_0 - x_1 i -x_2 j - x_3 k$ and satisfies 
$$\b{xy} = \b y \; \b x,\quad x,y\in\HH.$$
The real part of $x$ is denoted by $\Re x=x_0$. Observe that $x+ \b x = 2\Re x$. 
The  absolute value or norm of a quaternion is defined by $|x| = \sqrt{x\b x}$ and a euclidean inner product on $\HH$ is defined by polarization, 
$$ |x+y|^2 =|x|^2 + 2\langle x,y\rangle + |y|^2.$$
Note that 
\begin{equation}\label{eq:Re} \langle x,y\rangle = \Re(\b x y ) =\Re(x\b y)= \Re(\b yx)\end{equation}
and consequently,
$$ \langle xw, yw\rangle = \langle wx,wy\rangle = |w| \langle x,y\rangle.$$ 

A (hyper-)hermitian inner product $V\times V\to \scf$ on a vector space $V$ over $\CC$ (or $\HH$) satisfies by definition the properties
\begin{gather*}
	( u+v,w) =( u,w)+ ( v,w), \quad ( v\cdot a, w) = \overline a ( v,w), \quad 
	( w,v) = \overline{( v,w)},
\end{gather*}
and $( v,v) \geq 0$ with equality if and only if $v=0$. Such an inner product defines a norm  $\|v\| = \sqrt{( v,v)}$ and a  euclidean inner product $\langle u,v\rangle = \Re(u,v)$. It follows from \eqref{eq:Re} that $\|v\cdot a\| = |a|\|v\|$ for all $ a\in \scf$ and $v\in V$. 
An inner product space is a vector space $V$ equipped with a euclidean, hermitian, or hyperhermitian inner product.
The standard hyperhermitian inner product on $\HH^n$ is 
$$ ( v,w) = \sum_{i=1}^n \overline{v_i}w_i.$$
The adjoint of $f\in \Hom(V,W)$ exists and satisfies 
$$ \langle v, f^* (w)\rangle = \langle f(v), w\rangle, \quad v\in V, \ w\in W.$$
If the matrices of $f$ and $f^*$ are considered  with respect to orthonormal bases of $V$ and $W$, then $A^*= \overline A^t$, as usual.

The division rings $\RR$, $\CC$, and $\HH$ give rise to the classical groups.
We denote by $\GL(V)$ the general linear group and set  $\GL(n,\scf)= \GL(\scf^n)$. The special linear group will be defined in the next subsection after we have discussed the determinant. If $V$ is an inner product space, we define
$$\U(V)= \{ f\in \GL(V)\colon f\circ f^*= I\}$$
and call it the unitary group. This provides a convenient uniform terminology for the classical groups  $\mathrm O(n)= \U(\RR^n)$,  $\U(n)=\U(\CC^n)$, and $\Sp(n)= \U(\HH^n)$. Note that the unitary group acts transitively on the set of orthonormal basis of $V$. 

\subsection{Non-commutative determinants} \label{sec:det}

Dieudonn\'e \cite{Dieudonne:Det} has extended the theory of determinants to fields  that need not be  commutative. Let $k^\times$ denote the multiplicative group of non-zero elements of a division ring $k$ and let  $[k^\times, k^\times]$ be the commutator subgroup. To the group $k^\times /[k^\times, k^\times]$ we adjoin a zero element $0$ with the obvious multiplication rules. 
Note that if $k$ is commutative, then $[k^\times, k^\times]=\{1\}$ and the Dieudonn\'e determinant coincides with the usual determinant.  We therefore do not notionally distinguish  between the classical and the Dieudonn\'e determinant in the following. 

Let us write $[a]$ for  the coset  of $a\in k^\times $  in $k^\times /[k^\times,k^\times]$ and put moreover $[0]=0$. 
\begin{theorem}[{\cite[Section IV.1]{Artin}}]\label{thm:Ddet}
	The Dieudonn\'e determinant $$\det \colon \Mat(n,k)\to  k^\times /[k^\times, k^\times] \cup \{0\} $$ is uniquely characterized by the following properties:
	\begin{enumerate}[(a)]
		\item If  $A'$ is obtained from $A$ by multiplying one row from the left by $\mu$, then $\det A' = [\mu] \det A$
		\item If  $A'$ is obtained by adding a left multiple of a row of $A$ to another  row, then $\det A' =  \det A$.
		\item The unit matrix has determinant $1$. 
	\end{enumerate}
Moreover, the Dieudonn\'e determinant has the following properties:
\begin{enumerate}[(1)]
		\item $\det (A)=0$ if and only if $A$ is singular.
		\item \label{eq:detAB} $\det(AB)= \det(A) \det(B)$.
		\item {If $A'$ is obtained from $A$ by interchanging two rows or two columns, then $\det A' = [-1] \det A$. }
		\item  If  $A'$ is obtained by adding a right multiple of a column of $A$ to another  column, then $\det A' =  \det A$.
		\item  If  $A'$ is obtained from $A$ by multiplying one column from the right by $\mu$, then $\det A' = [\mu] \det A$
	\end{enumerate}
\end{theorem}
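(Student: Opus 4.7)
The plan is to construct the Dieudonn\'e determinant explicitly via Gaussian elimination and then verify that the resulting map satisfies the listed properties. First I would address uniqueness: starting from any $A\in\Mat(n,k)$, using only the elementary row operations of types (a) and (b), one can reduce $A$ either to a matrix with a zero row (when $A$ is singular) or to the diagonal matrix $\mathrm{diag}(1,\ldots,1,d)$ for a unique coset $[d]\in k^\times/[k^\times,k^\times]$ (when $A$ is invertible). Properties (a), (b), (c) then force the determinant to equal $0$ or $[d]$ respectively, which shows uniqueness.

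For existence, I would define $\det A$ by carrying out such a reduction and reading off the answer. The key step is well-definedness: the result must be independent of the particular sequence of operations used. I would prove this by interpreting the construction group-theoretically. Let $E_{ij}(\mu)=I+\mu e_{ij}$ (for $i\neq j$) denote the transvections realizing operations of type (b), and let $D_i(\mu)$ denote the diagonal matrices realizing operation (a). One checks that the subgroup $T$ generated by all transvections is normal in $\GL(n,k)$, that $\GL(n,k)=T\cdot \{D_n(\mu):\mu\in k^\times\}$, and that the assignment $A\mapsto [d]$ factors through the quotient $\GL(n,k)/T\cong k^\times/[k^\times,k^\times]$. The reduction procedure is then well-defined, and property (2) (multiplicativity) is automatic from the quotient-map interpretation, since the right-hand side makes $k^\times/[k^\times,k^\times]$ into an abelian group.

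The remaining properties can be verified directly. Property (1) is immediate, since $A$ is singular iff the reduction produces a zero row. Property (3) follows from the explicit identity that swaps two rows using three transvections and a scaling by $-1$, which introduces exactly the factor $[-1]$. Properties (4) and (5) follow by re-running the argument with columns and right-multiples in place of rows and left-multiples; the symmetric structure of the reduction (which uses only the unit element $1\in k$ commuting with everything on one side while the $\mu$'s sit on the other) ensures the column version of the construction yields the same value of $\det A$, so the column transformation rules mirror the row rules.

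The main obstacle is verifying that the transvection subgroup $T$ is normal, that it coincides with the commutator subgroup $[\GL(n,k),\GL(n,k)]$ for $n\geq 2$, and that the quotient by $T$ is isomorphic to $k^\times/[k^\times,k^\times]$ via the diagonal embedding $\mu\mapsto D_n(\mu)$. This is essentially the Whitehead lemma for division rings: the commutator identities $[E_{ij}(\mu),E_{jl}(1)]=E_{il}(\mu)$ for distinct $i,j,l$ and $D_i(\mu)D_j(\mu^{-1})\in T$ reduce the problem to a bounded computation, but one must be careful because $\scf=\HH$ is non-commutative. Once this normal form theorem for $\GL(n,k)$ is in hand, all the assertions of the theorem follow uniformly.
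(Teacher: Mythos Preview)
The paper does not give a proof of this theorem; it is quoted as a known result with a citation to Artin's \emph{Geometric Algebra}, Section~IV.1. Your sketch is the standard argument found there: reduce to normal form $\mathrm{diag}(1,\ldots,1,d)$ by transvections and scalings, then show well-definedness by identifying the transvection subgroup with the commutator subgroup of $\GL(n,k)$ so that $\det$ becomes the quotient map $\GL(n,k)\to\GL(n,k)/[\GL(n,k),\GL(n,k)]\cong k^\times/[k^\times,k^\times]$. The outline is correct, including the row-swap identity for~(3). For~(4) and~(5) the cleanest route, once multiplicativity~(2) is established, is to observe that a column operation is right multiplication by an elementary matrix whose determinant you already know; your ``re-run the argument with columns'' is fine but slightly more work than necessary.
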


The Dieudonn\'e determinant is in general not linear in each row, but satisfies only a version of subadditivity, see \cite[Theorem 4.3]{Artin}.  A recent paper \cite[Remark 1]{Pop:Quantum} asserts that there exists no row expansion for the Dieudonn\'e determinant. Formally this is correct, since there is in general no addition defined on $k^\times/[k^\times, k^\times]$.  An earlier paper by Brenner \cite[Theorem 4.3]{Brenner} claims the existence of  a row expansion, but he is not explicit about the meaning of his notation. 

 The following proposition, which will be crucial in our proof of the quaternionic  Busemann  random simplex inequality, can be interpreted as a weak form of row expansion. We suspect that it is the correct way to interpret the results of Brenner (Theorems 4.3 and 4.5 in \cite{Brenner}). 
\begin{proposition}\label{prop:row_exp}Let $A=(a_{ij}) \in \Mat(n,k)$. Then there exist elements $\lambda_1,\ldots, \lambda_n\in k$ that do not depend on the first row of $A$  such that 
	\begin{equation}\label{eq:row_exp}\det A = [ a_{11} \lambda_1 + \cdots + a_{1n}\lambda_n] \end{equation} 
and  $$[\lambda_i] = \det A(1,i),$$ where $A(i,j)\in \Mat(n-1,k)$ is the matrix $A$ with the $i$th row and $j$th column deleted. 
\end{proposition}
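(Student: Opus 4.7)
The plan is to reduce $A$ by column operations---which, by Theorem~\ref{thm:Ddet}, either leave $\det A$ invariant or change it by a tracked factor---to a block upper triangular form whose $1\times 1$ upper-left block is precisely the expression on the right-hand side of \eqref{eq:row_exp}. I would carry out the argument first under the generic assumption that the minor $D:=A(1,1)$ is invertible, and then handle the remaining cases by column permutation and a rank argument.

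Write $A$ in block form with first row $(a_{11},R)$ and first column $(a_{11},C^t)^t$, where $R=(a_{12},\dots,a_{1n})$, $C=(a_{21},\dots,a_{n1})^{t}$, and the lower-right block is $D$. Let $\mu=D^{-1}C\in k^{n-1}$, so $D\mu=C$. By Theorem~\ref{thm:Ddet}(4), adding to the first column of $A$ the right multiples $-\mu_\ell$ of columns $2,\dots,n$ preserves $\det A$ and zeroes out $C$, producing
\[
\begin{pmatrix} a_{11}-R\mu & R \\ 0 & D\end{pmatrix}.
\]
A standard block upper triangular computation (reducible to the multiplicativity of the Dieudonn\'e determinant) gives $\det A=[a_{11}-R\mu]\cdot\det D$. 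Pick any representative $\delta\in k^\times$ of the class $\det D$ and set
\[
\lambda_1=\delta,\qquad \lambda_j=-\mu_{j-1}\,\delta\quad (j\ge 2).
\]
Then $(a_{11}-R\mu)\delta=a_{11}\lambda_1+a_{12}\lambda_2+\cdots+a_{1n}\lambda_n$, which proves \eqref{eq:row_exp}. Since $\mu$ and $\delta$ are built only from $C$ and $D$, i.e., rows $2,\dots,n$ of $A$, the elements $\lambda_j$ do not depend on the first row.

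It remains to identify $[\lambda_j]$ with $\det A(1,j)$. For $j=1$ this is $[\delta]=\det D$ by construction. For $j\ge 2$, let $D^{(j)}$ be the matrix $D$ with its $(j-1)$-th column replaced by $C$. Using $D\mu=C$ and the column operations that subtract the right multiples $\mu_\ell$ of the other columns from the new $(j-1)$-th column, the column $C$ is transformed into $d_{j-1}\mu_{j-1}$, where $d_{j-1}$ is the original $(j-1)$-th column of $D$; hence $\det D^{(j)}=[\mu_{j-1}]\det D$. On the other hand, $D^{(j)}$ and $A(1,j)$ have the same columns up to $j-2$ adjacent swaps, so $\det D^{(j)}=[(-1)^{j}]\det A(1,j)$. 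Combining yields $[\lambda_j]=[(-1)^{j+1}]\det A(1,j)$, which over $\HH$ collapses to $[\lambda_j]=\det A(1,j)$ because $-1=[i,j]$ lies in $[\HH^\times,\HH^\times]$. The main technical obstacle is precisely the bookkeeping in this Cramer-type step: the column-replacement identity and the intervening column permutation must be tracked carefully inside the quotient $k^\times/[k^\times,k^\times]$. Once this is settled, the case $\det A(1,1)=0$ with some $\det A(1,j_0)\ne 0$ is handled by an initial column permutation bringing that invertible minor into position, and if \emph{every} $A(1,j)$ is singular, then rows $2,\dots,n$ of $A$ have rank at most $n-2$, forcing $\det A=0$, so the statement holds trivially with $\lambda_j=0$.
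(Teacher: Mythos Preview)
Your proof is correct and follows essentially the same route as the paper's: reduce $A$ by operations not involving the first row to block upper triangular form, so that the $(1,1)$ entry becomes the desired linear combination of $a_{11},\dots,a_{1n}$. The paper first row-reduces rows $2,\dots,n$ (bringing the lower block, after a column permutation, to $[\,y\mid D\,]$ with $D$ diagonal) and then clears $y$ by column operations; you instead use the inverse of $A(1,1)$ to clear $C$ in a single Schur-complement step, and your handling of the degenerate cases (permuting an invertible $A(1,j_0)$ into position, or setting all $\lambda_j=0$ when every minor is singular) matches the paper's case split. In one respect you go further: the paper's proof verifies only the expansion \eqref{eq:row_exp} and is silent on why $[\lambda_i]=\det A(1,i)$, whereas your Cramer-type computation $\det D^{(j)}=[\mu_{j-1}]\det D$ actually supplies this. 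Your observation that the honest outcome is $[\lambda_j]=[(-1)^{j+1}]\det A(1,j)$, collapsing to the stated identity only when $-1\in[k^\times,k^\times]$ (as over $\HH$), is also correct---and harmless for the paper's applications, which use only $|\lambda_i|=|\det A(1,i)|$.
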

\begin{proof} The basic idea is to perform elementary row operations on all rows except the first one. 	By adding suitable multiples of a row to another, we obtain from  $A$ a matrix $A'$,  such that one of the following holds: (1) The matrix $A'$ has a zero row; or  (2) There exists a permutation matrix $P=P_\sigma$ such that 
	$$ A'P= \begin{pmatrix} a_{1,\sigma(1)} & \cdots  & a_{1, \sigma(n-1)} & a_{1,\sigma(n)} \\
		  y & & D &  
		  \end{pmatrix}, $$
	  where  $\sigma$ is a permutation, $D=\operatorname{diag}(d_1,\ldots, d_{n-1})$ is a diagonal matrix with non-zero diagonal entries, and $y$ is a column vector of length $n-1$. 

In the first case,  independently of the entries in the first row,  the matrix $A$ is always singular. Thus we set $\lambda_1=\cdots = \lambda_n=0$. 

In the second case, by adding suitable multiples of the columns $2,\ldots, n$  of $A'P$ to the first column, we obtain the matrix 
$$ A'' =  \begin{pmatrix} b & a_{1\sigma(2)} & \cdots  & a_{1, \sigma(n)}  \\
	0 & & D &   
\end{pmatrix}, $$
where 
$$ b=a_{1,\sigma(1)} -  a_{1\sigma(2)}d_1^{-1} y_1- \cdots  - a_{1,\sigma(n)}d_{n-1}^{-1} y_{n-1}.$$
By Theorem~\ref{thm:Ddet}\eqref{eq:detAB}, one has $$\det A = \det A' = (\det P)^{-1} \det A'' = (\det P)^{-1}[b]\det D.$$ Therefore setting 
\begin{equation}\label{eq:def_lambda} \lambda_{\sigma(1)}=\mu \quad \text{and} \quad  \lambda_{\sigma(i)} = - d_{i-1}^{-1} y_{i-1}\mu  \quad \text{for }\ i=2,\ldots, n
\end{equation}
 with $[\mu]= (\det P)^{-1}\det D $
 yields \eqref{eq:row_exp}. 
\end{proof}

We will need a more precise description of how the scalars $\lambda_1,\ldots,\lambda_n$ in Proposition~\ref{prop:row_exp} can be chosen.

\begin{lemma}\label{lemma:row_exp}
	In the setting of Proposition~\ref{prop:row_exp}, suppose that the submatrix $A(1,1)$ is non-singular. Let us write $z=(a_{i1})_{i=2}^n\in k^{n-1}$ for the first column of $A$ without the first entry. Let all other entries of $A$ be fixed.  Then $\lambda_1$ can be chosen independently of $z$ and the scalars $\lambda_2,\ldots, \lambda_n$ can  be chosen so that  there exists a non-zero element $\mu\in k$ such that  $\lambda_2(z)\cdot \mu^{-1}, \ldots,\lambda_n(z)\cdot \mu^{-1} $ are linearly  independent linear functions of $z$. 
\end{lemma}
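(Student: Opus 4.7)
The plan is to revisit the proof of Proposition~\ref{prop:row_exp} and exploit the additional structure enforced by the hypothesis that $A(1,1)$ is non-singular. Under this assumption, the block of $A$ formed by rows $2,\ldots,n$ and columns $2,\ldots,n$ is invertible, so we can diagonalize it to a diagonal matrix $D = \operatorname{diag}(d_1,\ldots,d_{n-1})$ with non-zero entries using elementary row operations on rows $2,\ldots,n$ alone, with no column permutation required. In other words, we are in Case~(2) of that proof with $\sigma = \mathrm{id}$ and $P = I$.

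The next step is to observe that these row operations depend only on the fixed entries of $A$, not on $z$. Packaging them as left multiplication by $\operatorname{diag}(1,M)$, where $M$ is the invertible $(n-1)\times(n-1)$ matrix uniquely determined by $M\cdot A(1,1) = D$, both $M$ and $D$ are independent of $z$. The first row of the reduced matrix is unchanged, while its first column has top entry $a_{11}$ and lower entries $y = Mz$. Hence $y$ depends on $z$ linearly, through an invertible $(n-1)\times(n-1)$ matrix.

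Substituting into the explicit formulas \eqref{eq:def_lambda} with $\sigma=\mathrm{id}$ gives $\lambda_1 = \mu$ and $\lambda_i = -d_{i-1}^{-1} y_{i-1}\,\mu$ for $i\ge 2$, where $[\mu] = \det D$. Since $\det D$ is independent of $z$, I would fix $\mu = d_1 d_2 \cdots d_{n-1}$ once and for all, which is non-zero and makes $\lambda_1$ independent of $z$ as required. Right-multiplying by $\mu^{-1}$ then gives
$$\lambda_i \cdot \mu^{-1} = -d_{i-1}^{-1} y_{i-1} = \sum_{j=1}^{n-1} \bigl(-d_{i-1}^{-1} m_{i-1,j}\bigr)\, z_j, \quad i = 2,\ldots,n,$$
which is a linear function of $z$ (with coefficients acting on the left of the coordinates, as required in a right vector space).

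The remaining point, and the one requiring the most care, is linear independence. The coefficient matrix of the tuple $(\lambda_2\mu^{-1},\ldots,\lambda_n\mu^{-1})$ is $-D^{-1}M$, a product of two invertible matrices and hence itself invertible. Over a non-commutative division ring one has to check that invertibility of the coefficient matrix translates into linear independence of the associated right-linear functionals in the left $k$-module they span; this is standard and follows from the fact that an invertible square matrix induces a bijection $k^{n-1}\to k^{n-1}$. This noncommutative bookkeeping is the only delicate point in the argument; the rest is a direct inspection of the proof of Proposition~\ref{prop:row_exp}.
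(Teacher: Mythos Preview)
Your proof is correct and follows the same approach as the paper: both exploit that the non-singularity of $A(1,1)$ permits the permutation in the proof of Proposition~\ref{prop:row_exp} to fix the first column, so that the vector $y$ arises from $z$ via an invertible linear substitution determined solely by the fixed block $A(1,1)$. You spell out more explicitly what the paper leaves terse---packaging the row operations as a matrix $M$ with $y=Mz$, fixing a concrete representative $\mu=d_1\cdots d_{n-1}$, and identifying the coefficient matrix of the functionals as $-D^{-1}M$---but the argument is the same.
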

\begin{proof}
	This follows immediately from the proof of Proposition~\ref{prop:row_exp}. Indeed, the fact that $A(1,1)$ is non-singular implies that the permutation $\sigma$ can be chosen so that $\sigma(1)=1$. Since $y$ is the result of row operations applied to $z$, the coordinates of $y$ are linearly independent linear functions of $z$. The claimed properties of the $\lambda_i$ are now obvious from   definition \eqref{eq:def_lambda}. 
\end{proof}

If $f$ is an endomorphism of $V$ with matrix $A$ with respect to some basis, we define 
$ \det f = \det A$. By Theorem~\ref{thm:Ddet}\eqref{eq:detAB} this definition does not depend on the choice of basis.

Let us consider the special case $k =\HH$. One can show that the commutator subgroup $[\HH^\times, \HH^\times]$ consists precisely of quaternions of unit norm, see, e.g., \cite[Lemma 8]{Aslaksen:Determinant} for a proof. In particular, $-1$ belongs to the commutator subgroup and thus permutation matrices have Dieudonn\'e determinant $1$. A direct consequence is that over the quaternions the Dieudonn\'e determinant has the additional property that interchanging two rows or two columns does not change the determinant. In  many other respects as well, the Dieudonn\'e determinant behaves similarly to  the absolute value of a real or complex determinant. For an introduction to the Dieudonn\'e determinant that emphasizes this perspective, see \cite{Alesker:non-comm}.

Note  that  even for quaternionic $(2\times 2)$-matrices the identity  $\det A^t = \det A$ is not true, see \cite[Section 4.2]{LinYu:Ddet} for a counterexample.  However, the following is true.
\begin{proposition} \label{prop:adjoint}
	$\det A^*= \det A$ for $A\in \Mat(n,\HH)$. 
\end{proposition}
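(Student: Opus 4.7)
The plan is to exploit the observation, recorded just before the statement, that $[\HH^\times,\HH^\times]$ is exactly the group of unit quaternions. This identifies the quotient $\HH^\times/[\HH^\times,\HH^\times]$ with $(\RR_{>0},\cdot)$ via $[a]\mapsto|a|$; in particular $[\overline a]=[a]$ for every $a\in\HH^\times$. The aim is to factor $A$ in a way that makes the transition from rows to columns (i.e.\ from $A$ to $A^*$) cost nothing in this quotient.

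If $A$ is singular then so is $A^*$, and Theorem~\ref{thm:Ddet}(1) gives $\det A^*=0=\det A$. Assume now that $A$ is invertible. I would perform Gaussian elimination using only row-swap matrices and transvections $T_{ij}(c)=I+c\,e_{ij}$ with $i\neq j$, reducing $A$ to a diagonal matrix $D=\operatorname{diag}(d_1,\ldots,d_n)$. Row swaps contribute the factor $[-1]$, which equals $1$ in the quaternionic quotient, and transvections do not affect the determinant by Theorem~\ref{thm:Ddet}(b); hence $\det A=\det D=[d_1\cdots d_n]$.

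Writing the corresponding elementary factorization $A=E_1\cdots E_k\,D$, one has $A^*=D^*E_k^*\cdots E_1^*$. A row-swap matrix is real and symmetric, hence self-adjoint, and $T_{ij}(c)^*=T_{ji}(\overline c)$ is again a transvection. Thus each $E_i^*$ has Dieudonn\'e determinant $1$, and $\det A^*=\det D^*=[\overline{d_n}\cdots\overline{d_1}]$. Taking norms gives $|\overline{d_n}\cdots\overline{d_1}|=|d_n|\cdots|d_1|=|d_1\cdots d_n|$, so the two classes coincide in $\HH^\times/[\HH^\times,\HH^\times]$.

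The genuinely quaternion-specific point, and the main subtlety to be aware of, is that row and column swaps may be used freely here without introducing a $[-1]$ factor; over a general non-commutative division ring such sign factors would survive, and the analogous equality between $\det A$ and $\det A^*$ can fail. This is consistent with the fact, noted just before the statement, that $\det A^t=\det A$ already fails for quaternionic $(2\times 2)$-matrices, so the argument truly uses the adjoint (involving conjugation) rather than the transpose.
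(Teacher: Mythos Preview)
Your proof is correct and follows essentially the same approach as the paper's: reduce to the diagonal case by observing that (left) elementary row operations on $A$ correspond to (right) elementary column operations on $A^*$, and then use $[\bar a]=[a]$ in $\HH^\times/[\HH^\times,\HH^\times]$. The paper compresses this into two sentences; your version simply spells out the elementary factorization and the effect of the adjoint on each factor.
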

\begin{proof}Since  (left) elementary row operations on $A$ correspond to (right) elementary column operations on $A^*$, it suffices to consider diagonal matrices. Since $a=\b a $ modulo  $[\HH^\times, \HH^\times]$, the claim follows.
\end{proof}
\begin{corollary}\label{cor:det_unitary} If $V$ is an inner product space, then
	$|\det f|= 1$ for $f\in \U(V)$. 
\end{corollary}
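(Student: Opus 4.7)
The plan is to apply the multiplicativity of the Dieudonn\'e determinant, Theorem~\ref{thm:Ddet}(\ref{eq:detAB}), to the defining identity $f\circ f^*=I$ for $f\in\U(V)$. This immediately yields
$$\det f\cdot\det f^*=\det I=1,$$
and everything else is a matter of interpreting $\det f^*$ in each of the three cases for $\scf$.

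For $\scf=\R$ or $\scf=\CC$, after fixing an orthonormal basis of $V$, the matrix of $f^*$ is $\overline A^{\,t}$ where $A$ is the matrix of $f$, so that $\det f^*=\overline{\det f}$ (with conjugation trivial in the real case). Substituting into the displayed identity gives $|\det f|^2=1$, as required.

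For $\scf=\HH$, I would invoke Proposition~\ref{prop:adjoint} to get $\det f^*=\det f$ as elements of $\HH^\times/[\HH^\times,\HH^\times]$. Combined with the displayed identity, this gives $(\det f)^2=1$ in the quotient. As noted in the paragraph preceding Proposition~\ref{prop:adjoint}, the commutator subgroup $[\HH^\times,\HH^\times]$ coincides with the set of unit quaternions, so the quotient is identified with $\R_{>0}$ via the norm; the only positive real square root of $1$ is $1$ itself, hence $|\det f|=1$.

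There is no serious obstacle: the corollary is a bookkeeping consequence of the multiplicativity of the Dieudonn\'e determinant together with Proposition~\ref{prop:adjoint}, both already established. The only mild subtlety is the case split, which is unavoidable because the relation between $\det A^*$ and $\det A$ takes slightly different forms over $\CC$ (complex conjugation) and over $\HH$ (trivial, thanks to the quotient by the commutator subgroup).
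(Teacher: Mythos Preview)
Your proposal is correct and matches the paper's intended argument. The corollary is stated in the paper without proof, immediately after Proposition~\ref{prop:adjoint}; your write-up simply makes explicit the one-line deduction from $f\circ f^*=I$, multiplicativity (Theorem~\ref{thm:Ddet}(\ref{eq:detAB})), and Proposition~\ref{prop:adjoint}, together with the classical facts over $\RR$ and $\CC$.
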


We define the special linear group over $\scf$ as 
$$ \SL(n,\scf)= \{ A\in \GL(n,\scf)\colon \det A=1\}$$
Also in the quaternionic case, the transformations belonging to this group are volume-preserving: 

\begin{proposition}\label{prop:real_det} In any   quaternionic vector space $V$
	\begin{equation}\label{eq:Ddet_real}|\det f|^4 = \det f_\RR,\end{equation}
	where $f_\RR$ the $\RR$-linear map obtained through the restriction of the scalar field.
\end{proposition}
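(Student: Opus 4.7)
The plan is to verify \eqref{eq:Ddet_real} on a generating set of $\GL(V)$ and then extend by multiplicativity. Both sides of \eqref{eq:Ddet_real} are multiplicative in $f$: Theorem~\ref{thm:Ddet}\eqref{eq:detAB} combined with the fact that the quaternionic norm descends to a well-defined multiplicative map $\HH^\times/[\HH^\times,\HH^\times]\to \RR_+$ (since $[\HH^\times,\HH^\times]$ consists of unit quaternions) yields $|\det fg|=|\det f|\,|\det g|$, while $(fg)_\RR=f_\RR\circ g_\RR$ gives multiplicativity of $\det f_\RR$. If $f$ is singular, any nonzero vector in $\ker f$ remains nonzero in $V_\RR$, so $f_\RR$ is singular and both sides vanish.

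For invertible $f$, I fix a basis and use Gaussian elimination with partial pivoting to write the matrix of $f$ as $A = M D$, where $M$ is a product of elementary shears $I+\lambda e_{pq}$ ($p\neq q$) and permutation matrices, and $D=\operatorname{diag}(\mu,1,\ldots,1)$ for some $\mu\in\HH^\times$. By multiplicativity, \eqref{eq:Ddet_real} reduces to verifying it on each of these three types of factors.

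A shear satisfies $\det = 1$ by Theorem~\ref{thm:Ddet}(b), and after grouping the coordinates of $V_\RR\cong\RR^{4n}$ by quaternionic slot, $(I+\lambda e_{pq})_\RR$ is the identity plus a single off-diagonal $4\times 4$ block, so $\det_\RR=1$ as well. Permutation matrices have Dieudonn\'e determinant $1$ over $\HH$, and each transposition of two quaternionic basis vectors permutes two four-dimensional blocks of real coordinates, contributing $(-1)^4=1$ to $\det_\RR$. Finally, $|\det D|^4=|\mu|^4$, and $D_\RR$ is block-diagonal: the identity on the last $n-1$ quaternionic slots and left multiplication $L_\mu\colon \HH\to \HH$ on the first. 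Since $\|L_\mu x\|=|\mu|\,\|x\|$, we may write $L_\mu=|\mu|\cdot U$ with $U\in\mathrm{O}(4)$, so $\det L_\mu=\pm|\mu|^4$; positivity of the sign follows by continuity, since $\HH^\times$ is connected and $L_1=I$ has $\det=+1$. Hence $\det D_\RR=|\mu|^4=|\det D|^4$. The only technical point is establishing the Gaussian-elimination factorization of $A$ over the skew-field $\HH$, but this is routine using the axioms (a)--(c) of the Dieudonn\'e determinant in Theorem~\ref{thm:Ddet}.
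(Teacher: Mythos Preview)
Your proof is correct and, unlike the paper's, is self-contained: the paper simply fixes a real basis of $V$ induced by an $\HH$-basis and then cites \cite[Theorem~9]{Aslaksen:Determinant} for the identity, whereas you verify \eqref{eq:Ddet_real} directly on a generating set of $\GL(n,\HH)$ and extend by multiplicativity. Your approach has the advantage of making the paper independent of Aslaksen's computation, and the ingredients you use (multiplicativity of both sides, block-triangularity of shears and permutations in real coordinates, and $\det L_\mu=|\mu|^4$ via the connectedness of $\HH^\times$) are all transparent.

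One small imprecision worth tightening: ordinary Gaussian elimination with pivoting over $\HH$ reduces $A$ to a \emph{full} diagonal matrix $\operatorname{diag}(d_1,\ldots,d_n)$, not to $\operatorname{diag}(\mu,1,\ldots,1)$. Collapsing the diagonal to a single nontrivial entry requires the additional fact that $\operatorname{diag}(c,c^{-1})$ is a product of elementary transvections (Whitehead's lemma), or equivalently that $E_n(\HH)=\ker\det$. You do not actually need this: your computation for $D$ goes through verbatim for a general diagonal, since $D_\RR$ is then block-diagonal with blocks $L_{d_i}$, each of real determinant $|d_i|^4$ by your connectedness argument, giving $\det D_\RR=\prod_i|d_i|^4=|\det D|^4$. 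Stating it this way also makes the appeal to ``axioms (a)--(c)'' unnecessary, since the required factorization is then literally standard row reduction over a skew-field.
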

\begin{proof}
	Let $(v_1,\ldots, v_n)$ be a basis of $V$ as vector space over $\HH$. Then 
	$$ (v_1,\ldots, v_n, v_1\cdot i,\ldots, v_n\cdot i, v_1\cdot j,\ldots, v_n\cdot j, 
	v_1\cdot k,\ldots, v_n\cdot k)$$
	is a basis of $V$ as a vector space over $\RR$. Equation \eqref{eq:Ddet_real} thus follows at once from \cite[Theorem 9]{Aslaksen:Determinant}. 
\end{proof}

\begin{lemma}\label{lemma-determinant}
	Let $A\in \Mat(n,\scf)$. Define the linear map
	\begin{align*} R_m\colon & \Mat(m\times n,\scf)\to \Mat(m\times n,\scf)\\
		&  M\mapsto MA,
		\end{align*}
	where $\Mat(m\times n, \scf)$ is regarded as a real vector space. Then, $|\det R_m|=|\det A|^{mp}$. 
\end{lemma}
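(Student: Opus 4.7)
The plan is to reduce to $m=1$ by a direct-sum decomposition, then identify $R_1$ up to $\RR$-linear conjugation with an $\scf$-linear endomorphism of $\scf^n$, and apply Proposition~\ref{prop:real_det} together with Proposition~\ref{prop:adjoint}.

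First, right multiplication by $A$ acts on each row of $M \in \Mat(m \times n, \scf)$ independently: if $M$ has rows $r_1, \ldots, r_m \in \Mat(1 \times n, \scf)$, then $MA$ has rows $r_1 A, \ldots, r_m A$. Viewing $\Mat(m \times n, \scf)$ as the $\RR$-linear direct sum of $m$ copies of $\Mat(1 \times n, \scf)$, the map $R_m$ is block diagonal with $m$ equal blocks $R_1 \colon v \mapsto vA$. Thus $\det R_m = (\det R_1)^m$, and it suffices to show $|\det R_1| = |\det A|^p$. Next, I would identify $\Mat(1 \times n, \scf)$ with $\scf^n$ (column vectors) via an $\RR$-linear isomorphism conjugating $R_A$ to a left multiplication $L_B \colon v \mapsto Bv$. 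For $\scf \in \{\RR,\CC\}$, ordinary transposition $r \mapsto r^t$ is $\scf$-linear and gives $B = A^t$. Over $\HH$, transposition is not $\HH$-linear, so I would use conjugate transposition $\sigma \colon r \mapsto r^*$; a direct computation using $\overline{xy} = \overline{y}\,\overline{x}$ yields $\sigma(rA) = A^* \sigma(r)$, so $R_A$ is $\RR$-conjugate to $L_{A^*}$.

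Finally, for any $B \in \Mat(n,\scf)$, the real determinant of $L_B$ has absolute value $|\det B|^p$: for $\scf=\RR$ this is immediate; for $\scf=\CC$ it is the classical identity $\det_{\RR} L_B = |\det B|^2$; and for $\scf=\HH$ it is exactly Proposition~\ref{prop:real_det}. Combined with Proposition~\ref{prop:adjoint}, which gives $|\det A^*| = |\det A|$ in the quaternionic case, this yields $|\det R_1| = |\det A|^p$ and completes the argument. The only real subtlety is the quaternionic case, where non-commutativity prevents ordinary transposition from being $\HH$-linear; conjugate transposition is merely $\RR$-linear but still preserves real determinants, at the cost of replacing $A$ by $A^*$, which is accounted for by Proposition~\ref{prop:adjoint}.
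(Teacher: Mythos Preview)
Your proof is correct and follows essentially the same route as the paper: reduce to $m=1$ by the row-wise block decomposition, conjugate $R_1$ by conjugate transposition to obtain left multiplication by $A^*$ on column vectors, and then invoke Proposition~\ref{prop:real_det} and Proposition~\ref{prop:adjoint}. The only cosmetic difference is that the paper uses the $*$-conjugation uniformly for all three scalar fields rather than splitting off the commutative cases with ordinary transposition.
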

\begin{proof} First, note that  $|\det R_m|= |\det R_1|^m$. Consider the  $\scf$-linear map $f\colon \Mat(n\times 1,\scf)\to \Mat(n\times 1,\scf)$, $f(X)=A^* X$. Clearly, one has $ R_1= *\circ f\circ *$. Therefore $\det R_1   = \det f_\RR$ and  the claim follows from Proposition~\ref{prop:real_det} and Proposition~\ref{prop:adjoint}.
\end{proof}

\subsection{Invariant smooth  measures}

Recall that a (strictly positive) smooth measure on a smooth manifold $X$ is a signed Borel measure $\mu$ such that in each coordinate chart $(U, \phi)$ there exists a (strictly positive) smooth function $f_U\colon \phi(U)\to \RR$ such that 
$$ \int_X h\, d\mu=  \int_{\phi(U)}  h(\phi^{-1}(x)) f_U(x)\, dx$$ 
for all continuous function $h\in C_c(U)$ with compact support contained in $U$. Note that if $(V,\psi)$ is another coordinate coordinate chart, then 
$$ f_U(x) =  g_{UV}(x)\,  f_V((\psi \circ \phi^{-1})(x)) , \quad x\in \phi(U\cap V),$$
with $g_{UV}(x)=|\det d(\psi\circ\phi^{-1})|(x)$,
by the change of variables formula.

The functions $g_{UV}$ coincide with the transition functions for the  density line bundle $\Dens(TX)$, see, e.g., \cite{Nicolaescu:Lectures} or \cite{Lee:Smooth}.
Thus smooth measures are in bijective correspondence with the smooth densities on $X$, i.e., smooth sections of   $\Dens(TX)$. We remind the reader that the fiber of $\Dens(TX)$   over a point $x\in X$ consists of densities on the tangent space $T_x X$ at this point.   Here a density  on an $n$-dimensional real vector spaces $V$ is a function on  the $n$th exterior power of $V$,  $\omega\colon \largewedge^n V\to \RR$, satisfying $\omega(t v_1\wedge \cdots \wedge v_n)= |t|\omega(v_1\wedge \cdots \wedge v_n)$ for $t\in \RR$. We denote by $\Dens(V)$ the $1$-dimensional real vector space of densities on $V$.  Below we will use the natural isomorphisms
\begin{gather*}
	\Dens(V^*) \simeq \Dens(V)^*,\\\Dens(V/E)\simeq \Dens(V)\otimes \Dens(E)^*.
\end{gather*} 

Densities can be pulled back under diffeomorphisms  $f\colon X\to X$   so that 
\begin{equation}\label{eq:pullback_density} \int_X f^* h  \cdot  f^* \rho  =\int_X h\cdot \rho\end{equation}
holds for every compactly supported continuous function $h$. If $G$ is a Lie group  that acts smoothly on $X$, then let us write $\lambda_g(x)= g\cdot x$. By \eqref{eq:pullback_density} a  $G$-invariant density $\rho$, i.e., a density satisfying $\lambda_g^* \rho = \rho$ for all $g\in G$, corresponds to a $G$-invariant  smooth measure on $X$ and vice versa. 

We construct now  an $\U(n,\scf)$-invariant strictly positive smooth measure on the Grassmannian $\Gr_m(n,\scf )$. There are various ways to do this and they are  well known, at least in the case $\scf=\RR$. The path we are  going take has the advantage that it allows us to easily prove the invariance of the (dual) affine  quermassintegrals in Section~\ref{sec:invariance}.

	Let $V$ be an $n$-dimensional vector space over $\scf$. The Grassmannian  $\Gr_m(V)$ is the set $m$-dimensional linear subspaces in $V$. We write $\Gr_m(n,\scf)$ for $\Gr_m(\scf^n)$.  
It is well-known that $\Gr_m(V)$ is a smooth manifold and that the general linear group $\GL(V)$ acts smoothly and transitively on $\Gr_{m}(V)$. 
We  will need a good description of the fibers of the density bundle.
If $V$ is a vector space over $\scf$, we write $\Dens(V)=\Dens(V_\RR)$ for the densities on $V$ viewed as a real vector space. 
If $G$ acts on $X$ and $x\in X$ is  a point, then $\Stab_G(x)=\{g\in G\colon g\cdot x=x\}$ denotes the stabilizer of $x$.  

\begin{proposition}\label{prop:densities}
	For each  $E\in \Gr_{m}(V)$ 
	there exists a  $\Stab_{\GL(V)}(E)$-equivariant isomorphism of real vector spaces
	$$\Dens(T_E \Gr_m(V)) \simeq  \Dens(V)^{\otimes m} \otimes \Dens(E^*)^{\otimes n }.$$
\end{proposition}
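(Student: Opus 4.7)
The plan is to identify both sides as one-dimensional real representations of $\Stab_{\GL(V)}(E)$ and check that their characters coincide; since any two one-dimensional real representations with the same character are isomorphic (via any nonzero $\RR$-linear map), this delivers the desired equivariant isomorphism.

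First I would invoke the standard identification $T_E \Gr_m(V) \simeq \Hom_\scf(E, V/E)$, which is natural in $E$ and hence $\Stab_{\GL(V)}(E)$-equivariant. For $g \in \Stab_{\GL(V)}(E)$, write $u = g|_E \in \GL_\scf(E)$ and $v = \bar g \in \GL_\scf(V/E)$. The block upper-triangular form of $g$ together with Theorem~\ref{thm:Ddet} yields $\det g = \det u \cdot \det v$, while Proposition~\ref{prop:real_det} (and its trivial analogues over $\RR$ and $\CC$) shows that the absolute value of the real determinant of any $\scf$-linear endomorphism $f$ of an $\scf$-vector space $W$ equals $|\det f|^{p}$.

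With these preparations, I would compute the two characters of $\Stab_{\GL(V)}(E)$. On the right-hand side, $g$ acts on $\Dens(V)^{\otimes m}$ with character $|\det u|^{-pm}|\det v|^{-pm}$ and on $\Dens(E^*)^{\otimes n}$ with character $|\det u|^{pn}$, so the total character of $\Dens(V)^{\otimes m}\otimes \Dens(E^*)^{\otimes n}$ is $|\det u|^{p(n-m)}|\det v|^{-pm}$. On the left-hand side, $g$ acts on $\Hom_\scf(E, V/E) \cong \Mat((n-m)\times m,\scf)$ by $\phi \mapsto v\phi u^{-1}$; right-multiplication by $u^{-1}$ contributes a real determinant of $|\det u|^{-p(n-m)}$ via Lemma~\ref{lemma-determinant}, and left-multiplication by $v$ contributes $|\det v|^{pm}$ via a column-wise application of Proposition~\ref{prop:real_det}. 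Passing to densities inverts this factor and produces exactly $|\det u|^{p(n-m)}|\det v|^{-pm}$ on the left as well.

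Since the two characters agree, an equivariant $\RR$-linear isomorphism exists. (A canonical one can be assembled from the short exact sequence $0 \to E \to V \to V/E \to 0$, which gives $\Dens(V) \simeq \Dens(E)\otimes \Dens(V/E)$, together with $\Dens(E^*)\simeq \Dens(E)^*$; but mere existence follows immediately from the character match.) The main obstacle I anticipate is bookkeeping in the noncommutative case: carefully distinguishing left- from right-multiplication when applying Lemma~\ref{lemma-determinant} and Proposition~\ref{prop:real_det}, and correctly tracking the inversion when passing between a linear action on a vector space and the induced action on its density line, so that the exponents on $|\det u|$ and $|\det v|$ line up in both characters.
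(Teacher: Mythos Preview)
Your argument is correct and complete. Both you and the paper start from the standard $\Stab_{\GL(V)}(E)$-equivariant identification $T_E\Gr_m(V)\simeq\Hom_\scf(E,V/E)$, but thereafter the routes diverge. The paper proceeds constructively: it proves a separate lemma exhibiting an explicit $\GL(V)\times\GL(W)$-equivariant isomorphism $\Dens(\Hom(V,W))\simeq\Dens(V^*)^{\otimes m}\otimes\Dens(W)^{\otimes n}$ (by writing down a specific density via a unit box and then verifying equivariance through a Jacobian computation), and then combines this with the natural isomorphism $\Dens(V/E)\simeq\Dens(V)\otimes\Dens(E)^*$ coming from the short exact sequence. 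Your approach instead observes that both sides are one-dimensional real representations, computes their characters directly (using Lemma~\ref{lemma-determinant} and Proposition~\ref{prop:real_det} exactly as the paper does in its equivariance check), and concludes from the character match. Your route is shorter and avoids the basis-independence verification that the paper's explicit construction needs; the paper's route has the advantage of producing a canonical isomorphism rather than merely asserting existence, though for the application in Section~\ref{sec:invariance} only existence is used. The bookkeeping you flag as a potential obstacle is handled correctly in your outline: the exponents $|\det u|^{p(n-m)}|\det v|^{-pm}$ agree on both sides.
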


The proof of Proposition~\ref{prop:densities} will be given below  based on two lemmas. Let us first record the following consequence.

\begin{corollary}
	Up to a factor of proportionality, there  exists a unique $\U(n,\scf)$-invariant smooth measure on $\Gr_m(n,\scf)$. 
\end{corollary}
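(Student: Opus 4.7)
The plan is to use transitivity of $\U(n,\scf)$ on $\Gr_m(n,\scf)$ to reduce the question to a single-point computation. A $\U(n,\scf)$-invariant smooth measure is the same as an invariant smooth density, and by transitivity any such density is determined by its value at a chosen base point $E_0 \in \Gr_m(n,\scf)$. This value must be fixed by the stabilizer $H:=\Stab_{\U(n,\scf)}(E_0)$ acting on the one-dimensional real vector space $\Dens(T_{E_0}\Gr_m(n,\scf))$; conversely, any non-zero $H$-fixed element in this fiber can be translated by the $\U(n,\scf)$-action to yield a well-defined, strictly positive, smooth $\U(n,\scf)$-invariant density on the whole Grassmannian. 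Thus both existence and uniqueness (up to scalar) follow at once, provided $H$ acts trivially on the fiber.

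To verify triviality, I would invoke Proposition~\ref{prop:densities}, which supplies a $\Stab_{\GL(\scf^n)}(E_0)$-equivariant, and hence $H$-equivariant, isomorphism
$$\Dens(T_{E_0}\Gr_m(n,\scf)) \simeq \Dens(\scf^n)^{\otimes m} \otimes \Dens(E_0^*)^{\otimes n}.$$
An element $f\in H$ is an $\scf$-unitary transformation of $\scf^n$ preserving $E_0$, so its restriction $f|_{E_0}$ belongs to $\U(E_0)$. The actions of $f$ on $\Dens(\scf^n)$ and of $f|_{E_0}$ on $\Dens(E_0)$ are given by multiplication by $|\det f_\RR|$ and $|\det(f|_{E_0})_\RR|$, respectively. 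By Corollary~\ref{cor:det_unitary} one has $|\det f|=|\det(f|_{E_0})|=1$, and Proposition~\ref{prop:real_det} in the quaternionic case, together with the analogous standard identity $\det f_\RR=|\det f|^2$ in the complex case, gives $|\det f_\RR|=|\det f|^p=1$ (and likewise on $E_0$). Dualizing the $E_0$-factor then shows each tensor factor is $H$-fixed, so $f$ acts trivially on the whole fiber.

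The only subtle point I anticipate is ensuring genuine $H$-equivariance of the isomorphism from Proposition~\ref{prop:densities}, rather than merely an isomorphism of real vector spaces; but this is exactly what the proposition asserts, and with it in hand the three cases $\scf\in\{\RR,\CC,\HH\}$ are handled uniformly by tracking the constant $p$ in $|\det f_\RR|=|\det f|^p$.
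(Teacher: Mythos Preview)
Your proposal is correct and follows essentially the same approach as the paper: reduce $\U(n,\scf)$-invariant densities to $\Stab_{\U(n,\scf)}(E_0)$-fixed vectors in the fiber $\Dens(T_{E_0}\Gr_m(n,\scf))$, then use the equivariant isomorphism of Proposition~\ref{prop:densities} to see that the stabilizer acts trivially. The paper states this more tersely, invoking the standard bijection between invariant sections and stabilizer-fixed fiber elements and then simply asserting triviality via Proposition~\ref{prop:densities}; you have merely unpacked the latter step by tracking $|\det f_\RR|=|\det f|^p=1$ on each tensor factor.
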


\begin{proof} It is a well-known,  simple fact that the $G$-invariant sections of a $G$-equivariant vector bundle are in  bijective correspondence with the $\Stab_G(p)$-invariant elements of the fiber over a point $p$. 
	
	Fix a subspace $E\in \Gr_m(n,\scf)$. 
	By the above, the space of $\U(n,\scf)$-invariant smooth sections of the density bundle of $\Gr_m(n,\scf)$ is isomorphic to the subspace of $\Stab_{\U(n,\scf)}(E)$-invariant elements of $\Dens(T_E \Gr_m(n,\scf))$. 	By Proposition~\ref{prop:densities} the action of $\Stab_{\U(n,\scf)}(E)$ on the fiber is trivial.
\end{proof}

\begin{lemma} For each  $E\in \Gr_{k}(V)$ 
there exists an  $\Stab_{\GL(V)}(E)$-equivariant isomorphism of real vector spaces
$$ T_E \Gr_k(V) \simeq  \Hom(E,V/E).$$ 
\end{lemma}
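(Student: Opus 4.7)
The plan is to construct the isomorphism via a standard chart on the Grassmannian, then reinterpret it canonically so that the $\Stab_{\GL(V)}(E)$-equivariance is immediate.

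First I would fix an $\scf$-linear complement $F\subseteq V$ with $V=E\oplus F$, and consider the open set $U\subseteq \Gr_k(V)$ consisting of those $E'$ with $E'\cap F=\{0\}$. Every $E'\in U$ is the graph of a unique $\scf$-linear map $\phi_{E'}\colon E\to F$, namely $E'=\{e+\phi_{E'}(e):e\in E\}$, so the assignment $\Phi\colon U\to \Hom_\scf(E,F)$, $E'\mapsto \phi_{E'}$, is a bijection carrying $E$ to $0$. This is the standard chart exhibiting $\Gr_k(V)$ as a smooth manifold; its differential at $E$ is a real-linear isomorphism $d\Phi\big|_E\colon T_E\Gr_k(V)\xrightarrow{\sim}\Hom_\scf(E,F)$. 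Composing with the natural real-linear isomorphism $\Hom_\scf(E,F)\xrightarrow{\sim}\Hom_\scf(E,V/E)$ induced by the inclusion $F\hookrightarrow V$ followed by projection to $V/E$ (which is an isomorphism because $F$ is complementary to $E$), one obtains the desired isomorphism
$$\Psi_E\colon T_E\Gr_k(V)\xrightarrow{\sim} \Hom_\scf(E,V/E).$$

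Next I would show that $\Psi_E$ admits a description independent of $F$, which will make equivariance transparent. Given a smooth curve $\gamma\colon(-\varepsilon,\varepsilon)\to\Gr_k(V)$ with $\gamma(0)=E$ representing a tangent vector $\xi\in T_E\Gr_k(V)$, and given $e\in E$, one can choose a smooth lift $e(t)\in\gamma(t)$ with $e(0)=e$; the class $\dot e(0)+E\in V/E$ is independent of the choice of lift, because any two lifts differ by a curve in $E$ whose derivative lies in $E$. The resulting $\scf$-linear map $e\mapsto \dot e(0)+E$ coincides with $\Psi_E(\xi)$: using the chart $\Phi$, a curve of the form $\gamma(t)$ with $\phi_{\gamma(t)}=t\psi+O(t^2)$ lifts $e\in E$ to $e(t)=e+t\psi(e)+O(t^2)$, whose derivative modulo $E$ is $\psi(e)\bmod E$, matching the composition with the projection $F\to V/E$. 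Thus $\Psi_E$ is intrinsic.

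Finally, for equivariance, let $g\in\Stab_{\GL(V)}(E)$ and write $\bar g\colon V/E\to V/E$ for the induced map. The action on $\Gr_k(V)$ sends $\gamma(t)$ to $g\cdot\gamma(t)$, and the lift $e(t)$ of $e$ is carried to $g(e(t))$, which lifts $g(e)\in E$ with derivative $g(\dot e(0))$, projecting to $\bar g(\dot e(0)+E)$ in $V/E$. Hence the tangent vector $g_*\xi$ corresponds under $\Psi_E$ to the map $g(e)\mapsto \bar g(\Psi_E(\xi)(e))$, i.e.\ to $\bar g\circ\Psi_E(\xi)\circ (g|_E)^{-1}$. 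This is by definition the natural action of $\Stab_{\GL(V)}(E)$ on $\Hom_\scf(E,V/E)$, so $\Psi_E$ is equivariant.

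The argument is essentially routine; the only point requiring care is ensuring the real-linear isomorphism $\Hom_\scf(E,F)\simeq \Hom_\scf(E,V/E)$ is used on the nose (both are $\scf$-linear maps, treated as real vector spaces, which is consistent with the quaternionic setting where $\Hom_\HH(E,V/E)$ carries no natural quaternionic structure), and that the curve-based description is exploited to avoid any $F$-dependent verifications in the equivariance step.
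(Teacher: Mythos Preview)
Your proof is correct, but it takes a different route from the paper's. The paper invokes the general fact that for any $G$-homogeneous space $X$ and point $p\in X$ there is a $\Stab_G(p)$-equivariant isomorphism $T_pX\simeq\mathfrak g/\mathfrak h$, and then simply computes $\mathfrak g=\Hom(V,V)$ and $\mathfrak h=\{f\in\Hom(V,V):f(E)\subseteq E\}$, whence $\mathfrak g/\mathfrak h\simeq\Hom(E,V/E)$. Your argument instead builds the isomorphism directly from a graph chart, then reformulates it intrinsically via curves and lifts so that equivariance can be checked by hand. The paper's approach is shorter and conceptually cleaner once the Lie-theoretic machinery is available; your approach is more elementary and self-contained, and it makes the geometric content of the isomorphism (differentiating a moving subspace) explicit, which is helpful in the quaternionic setting where one may be wary of invoking black boxes. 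Both arguments are standard and equally valid.
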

\begin{proof} 
	It is well-known that for every $G$-homogeneous space $X$ and every $p\in X$ there exists a $\Stab_G(p)$-equivariant isomorphism 
	$T_p X\simeq \mathfrak g/ \mathfrak h$, where $\mathfrak g$ is the Lie algebra of $G$ and $\mathfrak h$ is the Lie algebra of stabilizer. 
	
	In the situation of the lemma, $\mathfrak g= \Hom(V,V)$ {is the real vector space of $\scf$-linear endomorphisms of $V$} and 
	$$\mathfrak h=\{ f\in \Hom(V,V)\colon f(E)\subseteq E\}.$$ Hence 
$\mathfrak g/\mathfrak h\simeq \Hom(E,V/E)$ as desired. \end{proof}

\begin{lemma}
Let $V,W$ be vector spaces over $\scf$ and let $n=\dim V$ and $m=\dim W$.  Then there exists a $\GL(V)\times \GL(W)$-equivariant isomorphism 
$$ \Dens(\Hom(V,W)) \simeq  \Dens(V^*)^{\otimes m } \otimes \Dens(W)^{\otimes n}.$$
\end{lemma}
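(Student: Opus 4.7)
My plan is to reduce the claim to a one-dimensional character computation. Both sides of the asserted isomorphism are one-dimensional real vector spaces on which $\GL(V)\times\GL(W)$ acts by a single character, and any linear isomorphism between one-dimensional representations with matching characters is automatically equivariant. So the whole proof will reduce to computing both characters and verifying that they coincide.

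To compute the character on $\Dens(\Hom(V,W))$, I would fix $\scf$-bases of $V$ and $W$, identifying $\Hom(V,W)$ with $\Mat(m\times n,\scf)$. The natural action $(g,h)\cdot f = h\circ f\circ g^{-1}$ then becomes $M\mapsto BMA^{-1}$, where $A$ and $B$ are the matrices of $g$ and $h$. Left multiplication $M\mapsto BM$ splits into $n$ copies of the $\scf$-linear action of $B$ on $\scf^m$, contributing $|\det h|^{pn}$ to the absolute value of the real determinant by Proposition~\ref{prop:real_det}. Right multiplication $M\mapsto MA^{-1}$ is precisely the map $R_m$ of Lemma~\ref{lemma-determinant} applied to $A^{-1}$, contributing $|\det g|^{-mp}$. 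Since $\Dens$ transforms by the reciprocal of the real determinant, the character on $\Dens(\Hom(V,W))$ comes out to $|\det g|^{mp}|\det h|^{-pn}$.

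For the right-hand side the computation is more direct: Proposition~\ref{prop:real_det} gives $|\det_{\RR} g|=|\det g|^p$, so $\Dens(V)$ carries the character $|\det g|^{-p}$ and therefore $\Dens(V^*)\simeq\Dens(V)^*$ carries $|\det g|^p$. Taking the $m$-th tensor power and carrying out the analogous computation for $h$ on $\Dens(W)^{\otimes n}$ yields the total character $|\det g|^{pm}|\det h|^{-pn}$, matching the expression above. Any non-zero linear map between the two sides is therefore $\GL(V)\times\GL(W)$-equivariant, proving the lemma.

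The only genuine subtlety is the right-multiplication contribution in the quaternionic case: this is exactly where the non-commutativity of $\HH$ obstructs a naive $V^*\otimes_\scf W$ description of $\Hom(V,W)$, and Lemma~\ref{lemma-determinant} is precisely the tool designed to handle it. Once its estimate is invoked, the remainder of the argument is formal.
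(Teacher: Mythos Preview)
Your proof is correct and follows essentially the same route as the paper: both arguments reduce the claim to computing the character of $\GL(V)\times\GL(W)$ on each side, invoking Lemma~\ref{lemma-determinant} for the right-multiplication contribution and Proposition~\ref{prop:real_det} to pass between Dieudonn\'e and real determinants. The only cosmetic difference is that the paper writes down an explicit candidate isomorphism before checking equivariance, whereas you bypass this by observing that one-dimensional representations with equal characters are automatically equivariantly isomorphic.
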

\begin{proof}
	Let $\omega_V\in \Dens(V)$ and $\omega_{W^*}\in \Dens(W^*)$ be positive densities. 
 The other two cases being parallel this one, let us consider only the case $\scf=\HH$.   	Let $v_1,\ldots, v_n$ be a basis of $V$ and let $\alpha_1,\ldots, \alpha_m$ be a basis of $W^*$ such that 
 $$ \omega_V(v_1,v_1\cdot i, v_1\cdot j, v_1\cdot k, \ldots, v_n,v_n\cdot i, v_n\cdot j, v_n\cdot k)=1$$ 
 and 
 $$ \omega_{W^*}(\alpha_1,\alpha_1\cdot i, \alpha_1\cdot j, \alpha_1\cdot k, \ldots, \alpha_m,\alpha_m\cdot i, \alpha_m\cdot j, \alpha_m\cdot k)=1.$$ 
 Define 
 $$ M(\omega_V, \omega_{W^*})=\{ f\in \Hom(V,W)\colon \max_{i,j} |\alpha_i(f(v_j))|\leq 1\}.$$
  and $\theta(\omega_V, \omega_{W^*})\in \Dens(\Hom(V,W))$ by stipulating
 $ \theta(M)=1$. 
Note that by Lemma~\ref{lemma-determinant} and Proposition~\ref{prop:real_det} the definition of $\theta(\omega_V, \omega_{W^*})$ is independent of the choice of bases of $V$ and $W^*$. 
 Let $${\omega_{V^*}}^{\otimes {m}} \otimes  {\omega_{W}}^{\otimes {n}}\mapsto  \theta(\omega_V, \omega_{W^*}),$$ where $\langle \omega_{V^*}, \omega_V\rangle=1$ and 
 $\langle \omega_{W^*}, \omega_W\rangle=1$, define an isomorphism between the spaces of densities.
Let $(g,h)\in \GL(V)\times \GL(W)$ and $\eta\in \Dens(\Hom(V,W))$. On  the one hand,  by Lemma~\ref{lemma-determinant},
$$ (g,h)\cdot \eta =  |\det g|^{4m}|\det h|^{-4n} \eta.$$
On the other hand, by Proposition~\ref{prop:real_det}, 
\begin{align*} (g,h) \cdot ({\omega_{V^*}}^{\otimes {m}}\otimes  {\omega_{W}}^{\otimes {n}})  &=(g\cdot \omega_{V^*})^{\otimes {m}} \otimes ( h\cdot \omega_{W})^{\otimes {n}}\\
&=  |\det g|^{4m} |\det h|^{-4n} ({\omega_{V^*}}^{\otimes {m}}\otimes  {\omega_{W}}^{\otimes {n}}) .
\end{align*}
Therefore the isomorphism is  $\GL(V)\times \GL(W)$-equivariant.

\end{proof}

A theory of invariant measures can be developed in much greater generality. It will be convenient for us to employ a uniqueness result attributed to H.~Weyl. Let us recall from the treatise by Nachbin \cite{Nachbin} the basic terminology of the subject. 

	Let $X$  be homogeneous space under a locally compact group  $G$. For $g\in G$ and every continuous function  $f\in C(X)$ define 
$ g\cdot f(x)= f(g^{-1} x)$. A positive linear functional $\mu :C_c(X)\to\R$ is called a 
relatively invariant positive  integral with modulus $\Delta:G\to\R$, if for every $g\in G$ there exists a number $\Delta(g)>0$ such that
$$\mu(g\cdot f)=\Delta(g)\mu(f),$$
for any $g\in G$ and for any $f\in C_c(X)$.

The following uniqueness result holds.
\begin{theorem}[{\cite[Theorem 1 of Section 4, Chapter III]{Nachbin}}]\label{thm-uniqueness}
	Let $X$  be a homogeneous space under a locally compact group  $G$.  Then, up to a factor of proportionality, there exists at most one non-trivial relatively invariant positive integral with modulus $\Delta$.
\end{theorem}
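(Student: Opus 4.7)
The plan is to reduce this uniqueness statement to the uniqueness (up to a scalar) of left Haar measure on the locally compact group $G$ itself, by transferring relatively invariant integrals from $X$ to $G$ via an averaging procedure over the isotropy subgroup.

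First I would fix a basepoint $x_0\in X$ and let $H=\Stab_G(x_0)$, a closed subgroup; we may identify $X\cong G/H$ as $G$-spaces. Since $H$ is locally compact, it carries a right Haar measure $dh$. Next, define the averaging operator $T\colon C_c(G)\to C_c(X)$ by
$$ (Tf)(x)=\int_H f(gh)\,dh,\qquad g\cdot x_0=x.$$
Independence of the choice of $g$ follows from right-invariance of $dh$, and a standard partition-of-unity argument on $G/H$ shows that $T$ is well-defined, carries compactly supported functions to compactly supported functions, and is surjective onto $C_c(X)$.

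Given two relatively invariant positive integrals $\mu_1,\mu_2$ on $C_c(X)$ with the same modulus $\Delta$, set $\tilde\mu_i=\mu_i\circ T$. A direct computation using $T\circ L_g=L_g\circ T$ (which is immediate from the definition) combined with the relative invariance of $\mu_i$ yields the transformation rule
$$ \tilde\mu_i(L_g f)=\Delta(g)\,\tilde\mu_i(f),\qquad g\in G,\ f\in C_c(G),$$
where $(L_g f)(x)=f(g^{-1}x)$. Thus each $\tilde\mu_i$ is a relatively left-invariant positive integral on $G$ with modulus $\Delta$. Extending $\Delta$ to a continuous function on $G$, the functionals $f\mapsto \tilde\mu_i(\Delta^{-1} f)$ are left-invariant in the usual sense, so by uniqueness of left Haar measure on $G$ there exists a constant $c>0$ with $\tilde\mu_1=c\,\tilde\mu_2$. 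Surjectivity of $T$ then forces $\mu_1=c\,\mu_2$, as required.

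The main obstacle is the construction and surjectivity of the averaging operator $T$: showing that $Tf$ has compact support in $X$ and that every $\varphi\in C_c(X)$ arises as $Tf$ for some $f\in C_c(G)$ requires a careful use of local compactness of $G$ and $\sigma$-compactness of $G/H$, together with suitable continuous sections or partitions of unity. A secondary delicate point is tracking the modular factors to confirm that $\tilde\mu_i$ picks up exactly the modulus $\Delta$ (and not some twist by the Haar moduli of $G$ or $H$), which is where the right-invariance of $dh$ must be used precisely in the change of variables.
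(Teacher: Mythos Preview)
The paper does not give its own proof of this statement; Theorem~\ref{thm-uniqueness} is quoted verbatim from Nachbin's monograph and used as a black box in Section~\ref{sec-BP}. So there is nothing in the paper to compare your argument against.

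That said, your outline is essentially the classical proof one finds in Nachbin (and in Bourbaki and Weil): lift each relatively invariant integral from $X\cong G/H$ to $G$ via the averaging map $T$, divide by the modulus $\Delta$ to obtain genuinely left-invariant functionals on $G$, invoke uniqueness of Haar measure, and descend using the surjectivity of $T$. The reasoning is sound, and the two technical points you single out---that $T$ is well defined onto $C_c(X)$ and surjective, and that the modular bookkeeping comes out correctly---are exactly where the work lies. One small correction: $\sigma$-compactness of $G/H$ is not needed for surjectivity of $T$; local compactness alone suffices (the standard argument builds a preimage of $\varphi\in C_c(X)$ by multiplying $\varphi\circ\pi$ against a suitable bump function on $G$ and normalizing). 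Also, continuous sections of $G\to G/H$ need not exist in general, so it is the partition-of-unity route that actually works.
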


\section{Characterization of complex and quaternionic ellipsoids}
Let $V$ be an $n$-dimensional inner product space  over $\scf$. Let $B(V)= \{v\in V\colon \|v\|=1\}$ denote the  euclidean unit ball in $V$. Depending on the scalar field $\scf$, we call $E$ a real, complex, or quaternionic ellipsoid in $V$ if $E$ is the image of the euclidean unit ball in $V$ under an  $\scf$-affine  map $f\colon V\to V$.

We will make use of the following useful characterization of complex ellipsoids, which was established in \cite{A-B-M}.
\begin{theorem}[Arocha--Bracho--Montejano]\label{thm-complex-ellipsoids}
Let $V$ be a finite-dimensional complex vector space and $K$ be a convex body in $V$. Then, $K$ is a complex ellipsoid if and only if for every complex affine line $l$ of $V$ the intersection $l\cap K$ is either empty, a point, or a euclidean disk.    
\end{theorem}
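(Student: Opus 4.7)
The necessity is a direct verification: if $K = f(B)$ with $f$ a $\CC$-affine map and $l \subseteq V$ is a complex affine line, then $l' := f^{-1}(l)$ is a complex affine line and $f|_{l'}\colon l' \to l$ is a $\CC$-affine isomorphism of one-dimensional complex affine spaces, hence a Euclidean similarity on the associated Euclidean planes. Since $l' \cap B$ is empty, a point, or a Euclidean disk, so is $l \cap K = f(l' \cap B)$.

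For sufficiency, after reducing to the case of nonempty interior, the plan is to establish that $K$ is centrally symmetric and then that the squared Minkowski functional $F(v) = \|v\|_K^2$ is a positive-definite hermitian quadratic form. Central symmetry is forced because each complex-line section is a centrally symmetric disk; using convexity and that lines through a common point cover $V$, these symmetries combine into a global center, and after translation one assumes $K = -K$. Under this assumption, every complex line through the origin meets $K$ in a disk centered at $0$, which upgrades the $\RR_+$-homogeneity of $F$ to $\CC$-homogeneity: $F(zv) = |z|^2 F(v)$ for all $z \in \CC$ and $v \in V$. The crux of the proof is to show that $F$ is a real quadratic form on $V_\RR$; once this is established, $\CC$-homogeneity immediately gives $F(iv) = F(v)$ and, by polarizing $F((1+i)v) = 2F(v)$, the identity $B(v, iv) = 0$ for the associated symmetric bilinear form $B$. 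Together these two identities characterize $F$ as twice the real part of a positive-definite hermitian form on $V$, so that $K = \{F \leq 1\}$ is a complex ellipsoid.

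To show $F$ is quadratic, I would fix an arbitrary complex affine line $l = v_0 + \CC w$ and observe, by applying the hypothesis to the dilates $cK$, that every sublevel set $\{z \in \CC : F(v_0 + zw) \leq c\}$ is a Euclidean disk; combined with convexity this forces $F|_l$ to be radial about some center, $F(v_0 + zw) = H_{v_0,w}(|z - z_0(v_0,w)|)$. The asymptotic relation $F(v_0 + zw)/|z|^2 \to F(w)$ as $|z| \to \infty$, a consequence of $\CC$-homogeneity, pins down the leading behavior $H_{v_0,w}(r) \sim F(w)\,r^2$; a careful comparison of $F$ along parallel complex lines (again via $\CC$-homogeneity) then refines this to the claim that $H_{v_0,w}$ is exactly a quadratic polynomial in $r^2$. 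The main obstacle I foresee is exactly this refinement step. An alternative route is to induct on $n = \dim_{\CC} V$ with base case $n = 2$: for $n \ge 3$, every real $2$-plane in $V$ lies in some complex $2$-plane, and by the inductive hypothesis the complex $2$-plane section of $K$ is a complex ellipsoid, so every real $2$-section of $K$ is a real ellipse and a classical Blaschke-type theorem identifies $K$ as a real ellipsoid, which the hermitian form argument above promotes to complex. The base case $n = 2$, however, remains delicate, since in $\CC^2 \cong \RR^4$ the complex lines form only a $2$-parameter family within the $4$-parameter Grassmannian of real $2$-planes, so Blaschke's criterion does not apply directly and one really does need the direct analysis of $F$ sketched above.
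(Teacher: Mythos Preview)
The paper does not prove this theorem; it is quoted from Arocha--Bracho--Montejano \cite{A-B-M} and used as a black box in the proof of the quaternionic analogue (Theorem~\ref{thm-quaternionic-ellipsoids}). There is thus no proof in the paper to compare your attempt against.

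As for your proposal itself, the necessity direction is fine and the overall strategy for sufficiency is reasonable, but the argument is incomplete at exactly the points you flag, and at one point you do not flag. First, your claim that the individual central symmetries of the disk sections ``combine into a global center'' is asserted, not proved: the centers of the disks on different complex lines are not a priori consistent, and it is not a general fact that a convex body whose sections by some family of $2$-planes are all centrally symmetric must itself be centrally symmetric. Second, and more seriously, the passage from ``$F$ restricted to every complex affine line is radial with $H(r)\sim F(w)r^2$'' to ``$H$ is exactly quadratic'' is, as you yourself say, the crux and is not carried out. Your inductive alternative correctly reduces the problem to $\dim_\CC V=2$, but you leave that case open as well; since in $\CC^2$ the complex lines form only a $2$-parameter subfamily of the real $2$-planes, this base case is genuinely where the content lies, and it is precisely what the Arocha--Bracho--Montejano paper supplies.
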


Our goal is to extend the above characterization to the quaternionic case.  We call a hermitian matrix $A\in \Mat(n,\HH)$ positive definite if $x^*Ax>0$ for all non-zero $x\in \HH^n$.  

\begin{lemma} \label{lemma:ellipsoid} For $E\subseteq \scf^n$ the following are equivalent:
	\begin{enumerate}[(a)]
		\item $E$ is an ellipsoid with non-empty interior.
		\item There exist $a\in \scf^n$ and a positive definite Hermitian matrix $H$ such that
		$$ E=\{x\in \scf^n\colon \langle x-a,H(x-a)\rangle \leq 1\}.$$
	\end{enumerate}
If these properties are satisfied, then $a$ and $H$ are unique.
\end{lemma}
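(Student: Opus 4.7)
The plan is to verify each implication by translating between the affine-image description and the quadratic-sublevel description of $E$, and then to settle uniqueness by a center-of-symmetry and polarization argument.

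For (b) $\Rightarrow$ (a), I would factor the positive definite Hermitian matrix as $H = T^*T$ with $T \in \GL(n,\scf)$. Over $\RR$ and $\CC$ this is the Cholesky (or spectral) decomposition; over $\HH$ it follows from the spectral theorem for hyperhermitian matrices, which diagonalizes $H$ by a unitary conjugation with positive real eigenvalues, whose real square roots may then be taken. With such a factorization, $\langle x-a, H(x-a)\rangle = \|T(x-a)\|^2$, so $E$ is the image of the euclidean unit ball under the $\scf$-affine map $y \mapsto T^{-1}y + a$. Conversely, if $E$ is the image of the unit ball under an $\scf$-affine map $y \mapsto Ty + a$, then $T$ must be invertible because $E$ has non-empty interior, and the condition $\|T^{-1}(y-a)\|^2 \leq 1$ rearranges to the quadratic inequality in (b) with $H = (T^{-1})^* T^{-1}$. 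A direct computation using the adjoint identity recalled in Section~\ref{sec:linAlg} shows $H^* = H$ and that $H$ is positive definite.

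For uniqueness, I would first observe that $a$ is the unique center of symmetry of $E$: the quadratic form $v \mapsto \langle v, Hv\rangle$ is even in $v$, hence $E - a$ is centrally symmetric, and a bounded convex body with non-empty interior admits at most one center of symmetry (otherwise it would be invariant under a non-trivial translation). After translating so that $a = 0$, the radial function of $E$ in direction $u$ recovers the real scalar $\langle u, Hu\rangle$ for every $u$. The Hermitian matrix $H$ is then determined by its associated quadratic form via the polarization identity, which is valid over $\CC$ in the usual way and extends to the hyperhermitian setting over $\HH$ by combining the real polarization of the quadratic form with the sesquilinearity relation $(v, w\cdot \alpha) = (v,w)\cdot \alpha$ applied to $\alpha \in \{1, i, j, k\}$.

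The only step requiring genuine verification is the factorization $H = T^*T$ over $\HH$, and this is standard from the spectral theorem for hyperhermitian matrices; no other obstacle arises.
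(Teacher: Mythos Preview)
Your proposal is correct and essentially matches the paper's approach: the equivalence is handled in both via the square root (equivalently, a factorization $H=T^*T$) of a positive definite Hermitian matrix, which the paper simply cites from \cite{Zhang:Quaternions}. For uniqueness the paper is even terser, writing only that it ``follows from the real case''; your center-of-symmetry plus polarization argument is a correct and more explicit way of spelling out that reduction.
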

\begin{proof}
	That (a) implies (b) is straightforward.  The reverse implication follows from the existence of a square root for positive semidefinite hermitian matrices, which remains valid over the quaternions, see Corollary~6.2 and Remark~6.1 in \cite{Zhang:Quaternions}. Uniqueness in the complex and quaternionic case  follows from the real case. 
\end{proof}

\begin{theorem}\label{thm-quaternionic-ellipsoids} Let $V$ be a finite-dimensional quaternionic vector space and let  $K$ be a convex body in $V$. Then, $K$ is a quaternionic ellipsoid if and only if for every quaternionic affine line $L$ of $V$, $L\cap K$ is either empty, a point,  or a $4$-dimensional euclidean ball.  
\end{theorem}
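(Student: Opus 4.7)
My plan is to prove both implications. The forward one is routine: if $K = f(B)$ with $f$ an $\HH$-affine bijection, then for any quaternionic affine line $L\subseteq V$ the preimage $f^{-1}(L)$ is a quaternionic affine line, and $f$ restricted to $f^{-1}(L)$ has in suitable $\HH$-parameters the form $q\mapsto q_0 + rq$, which is a scaled euclidean isometry followed by a translation. Since $B$ meets any quaternionic affine line in the empty set, a point, or a $4$-ball, so does $K\cap L = f(B\cap f^{-1}(L))$.

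For the converse, I would first reduce to the complex case. Every complex affine line $\ell = v_0 + w\CC$ lies inside the quaternionic affine line $L = v_0 + w\HH$, and the intersection of a $4$-ball with the $2$-dimensional real affine subspace $\ell$ is empty, a point, or a euclidean $2$-disk. Hence Theorem~\ref{thm-complex-ellipsoids} applies and $K$ is a complex ellipsoid, so Lemma~\ref{lemma:ellipsoid} supplies a unique center $a\in V$ and a unique positive-definite $\CC$-hermitian operator $H$ with $K = \{x\colon \langle x-a, H(x-a)\rangle \le 1\}$. By translation I may assume $a = 0$, so $K = -K$.

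Next I would show that $K$ is preserved by right multiplication by $j$ (denoted $J$). For each nonzero $u\in V$ the quaternionic line $u\HH$ is $J$-invariant and passes through $0$; by central symmetry of $K$ and the hypothesis, $K\cap u\HH$ is a $4$-ball centered at $0$, hence preserved by the euclidean isometry $J$. Taking unions over $u$ gives $JK = K$. Since $J$ is a euclidean isometry with $J^2 = -I$, one has $J^* = -J$, so the quadratic form $\langle Jx, HJx\rangle = \langle x, (-JHJ)x\rangle$ defines the same ellipsoid as $\langle x, Hx\rangle$; a short check shows that $-JHJ$ is again positive-definite $\CC$-hermitian (note that $J$ is $\CC$-\emph{antilinear}, so the composition $JHJ$ lands back in the $\CC$-linear world). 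Uniqueness in Lemma~\ref{lemma:ellipsoid} then forces $-JHJ = H$, equivalently $HJ = JH$, so $H$ is $\HH$-linear.

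Finally, combining $\HH$-linearity of $H$ with $\CC$-hermiticity (equivalently, real-symmetry with respect to $\langle\cdot,\cdot\rangle$), I would verify $\HH$-hermiticity via the formula
$$
(u,v)_\HH = \langle u,v\rangle + i\langle ui,v\rangle + j\langle uj,v\rangle + k\langle uk,v\rangle,
$$
by moving $H$ across each inner product using real-symmetry and then across the right $\HH$-action using $\HH$-linearity to obtain $(u, Hv)_\HH = (Hu, v)_\HH$. A second application of Lemma~\ref{lemma:ellipsoid}, now over $\HH$, then identifies $K$ as a quaternionic ellipsoid. The main obstacle I anticipate is the bookkeeping in the third paragraph: verifying $J^* = -J$ and that $-JHJ$ is $\CC$-hermitian despite $J$ itself being $\CC$-antilinear. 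Once $HJ = JH$ is in hand, the remainder is a direct application of the two main tools.
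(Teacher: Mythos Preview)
Your proof is correct and follows essentially the same route as the paper: reduce to the complex case via Theorem~\ref{thm-complex-ellipsoids}, then use the uniqueness clause of Lemma~\ref{lemma:ellipsoid} to upgrade the $\CC$-hermitian form $H$ to an $\HH$-linear one. The only difference is a mild streamlining---where the paper reruns the complex-ellipsoid argument for every purely imaginary unit $\xi$ to obtain $K\cdot w=K$ for all $w\in S^3$, you check invariance only under right multiplication by $j$ (using directly that each quaternionic line through the origin meets the centered $K$ in a centered $4$-ball) and then note that $\CC$-linearity together with $HJ=JH$ already forces $\HH$-linearity.
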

\begin{proof}
	
If $K$ is a quaternionic ellipsoid, then it is straightforward to see that the intersection of $K$ with any quaternionic line is either empty, a point,  or a $4$-dimensional euclidean ball.

Let us prove the reverse statement. Let us point out  that with $K$ also every translate of $K$ does  satisfy the hypothesis. Choose a unit norm,  purely imaginary quaternion  $\xi\in \HH$. Then $\xi^2=-1$ and $J(v)= v\cdot \xi$ defines a complex structure on $V$. If  $v\in V$ is non-zero, then $\{v(a+\xi b)\colon a,b\in \RR\}$ is a complex line in $(V,J)$ and every complex line in $(V,J)$ is of this form. Therefore each (affine) complex line $l$ in $(V,J)$ is contained an (affine) quaternionic line $L$ in $V$.  Since $L\cap K$ is either empty, a point,  or a $4$-dimensional euclidean ball, the intersection $l\cap K$ must be either empty, a point, or a euclidean disc. Thus, by Theorem \ref{thm-complex-ellipsoids}, $K$ is a complex ellipsoid. In particular, replacing $K$ by a translate if necessary, there exists a self-adjoint, positive definite $\RR$-linear  endomorphism $H$ of $V $ such that 
$$K=\{ v\in V \colon \langle v, Hv\rangle \leq 1\}.$$ 

 Repeating the above argument for each purely imaginary quaternion $\xi\in S^3\subseteq \HH$ shows in particular that  $K\cdot w=K$ holds for every $w\in S^3$.  Thus, by the uniqueness part of Lemma~\ref{lemma:ellipsoid}, 
 $$ H(v\cdot w)\cdot w^{-1} = H(v)$$ 
 holds for all $v\in V$ and $w\in S^3$. We conclude that $H$ is  quaternionic-linear in addition to being real self-adjoint and positive definite. Together with Lemma~\ref{lemma:ellipsoid} this finishes the proof. 

\end{proof}

\section{The Busemann random simplex inequality}\label{sec-BRS}
For this section, $\scf$ will denote $\mathbb{C}$ or $\mathbb{H}$ and $p\in\{2,4\}$ will denote the dimension of $\scf$ over $\RR$. Let $\Phi:\RR_+\to \RR$ be a fixed strictly increasing function.

The first lemma  concerns  convex bodies  in euclidean space  $\RR^n$ without additional structure. In the special case $m=2$ it follows from a  result of Pfiefer \cite{Pfiefer:Max}.
\begin{lemma}\label{lemma-C_1}
	Let $K_1,\dots,K_m$ be convex bodies  in $\RR^n$. Set 
	$$M(K_1,\dots,K_m)=\int_{x_1\in K_1}\dots\int_{x_n\in K_m}\Phi(\|x_1+\cdots+x_m\|)dx_1\cdots dx_m.$$
Then 
	\begin{equation}\label{eq-C_1-1}
		M(K_1,\dots,K_m) \geq M(B_1,\dots,B_m),    
	\end{equation}
	where $B_i$ is the  euclidean ball with center at the origin and volume equal to the  volume of $K_i$.

Moreover, if the convex bodies $K_1,\ldots,K_m$ have non-empty interior, then equality holds in \eqref{eq-C_1-1} if and only if the  $K_i$ are euclidean balls and the centers $z_i$ of these balls satisfy 
$$ z_1+\cdots+z_m=0.$$

 \end{lemma}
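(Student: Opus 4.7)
The plan is to prove the inequality via Steiner symmetrization and to extract the equality case by tracing the symmetrization steps backward. Fix a hyperplane $H \subseteq \RR^n$ through the origin with unit normal $n_H$, and split $\RR^n = H \oplus \RR n_H$. For fixed ``horizontal'' coordinates $x_1', \ldots, x_m' \in H$, setting $A := \|x_1' + \cdots + x_m'\|$, the integrand $\Phi(\|x_1 + \cdots + x_m\|)$ depends on the vertical components $t_i := \langle x_i, n_H\rangle$ only through $\psi(t_1 + \cdots + t_m)$, where $\psi(t) := \Phi(\sqrt{A^2 + t^2})$ is even and strictly increasing in $|t|$. The range of $t_i$ is the interval $I_i(x_i') := \{t \in \RR : x_i' + tn_H \in K_i\}$ (an interval by convexity). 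A layer-cake decomposition of $\psi$ reduces the one-dimensional integral $\int_{I_1} \cdots \int_{I_m} \psi(\sum t_i)\, dt_1\cdots dt_m$ to a superposition of integrals $\int \mathbf{1}_{[-a,a]}(\sum t_i) \prod_j \mathbf{1}_{I_j}(t_j)\, dt$, whose monotonicity under symmetric decreasing rearrangement of the $I_j$ is given by the Brascamp--Lieb--Luttinger inequality. The net effect is that replacing each $I_j(x_j')$ by the interval of equal length centered at the origin \emph{decreases} the one-dimensional integral; integrating over the horizontal coordinates and using Fubini yields
\begin{equation}\label{eq:Steiner_M}
M(K_1, \ldots, K_m) \geq M(\St_H K_1, \ldots, \St_H K_m).
\end{equation}

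To conclude \eqref{eq-C_1-1}, I would invoke Klain's theorem (the classical $i=n-1$ case of Theorem~\ref{Bianchi-Gardner-Gronchi}): pick a countable sequence of hyperplanes $H_1, H_2, \ldots$ through the origin, cycling through a fixed finite family with sufficiently generic normals (e.g.\ the coordinate hyperplanes). The iterated Steiner symmetrizations then converge in the Hausdorff metric to a euclidean ball, and since the limit is reflectively symmetric through each $H_j$, its center lies in $\bigcap_j H_j = \{0\}$, so the limiting ball is centered. Applying this simultaneously to all $K_i$, together with the continuity of $M$ on uniformly bounded Hausdorff-convergent sequences (dominated convergence, using that $\Phi$ is locally bounded on the compact relevant range), yields \eqref{eq-C_1-1}. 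The forward direction of the equality characterization is immediate: if $K_i = B_i + z_i$ with $\sum z_i = 0$, then the substitution $y_i = x_i - z_i$ transforms $M(K_1,\ldots, K_m)$ into $M(B_1,\ldots, B_m)$.

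For the reverse direction, suppose $M(K_1, \ldots, K_m) = M(B_1, \ldots, B_m)$. Then the chain of inequalities above forces $M(K_1, \ldots, K_m) = M(\St_H K_1, \ldots, \St_H K_m)$ for \emph{every} hyperplane $H$ through the origin. Analyzing equality in the one-dimensional rearrangement (using strict monotonicity of $\psi$ in $|t|$), one sees that for a.e.\ $(x_1', \ldots, x_m') \in H^m$ the intervals $I_i(x_i')$ must be translates of their centered versions whose translation shifts sum to zero. Since the shift $c_i(x_i')$ depends only on $x_i'$, a product-measure argument forces each $c_i$ to be a.e.\ constant $c_i(H)$ on the projection $\pi_H K_i$ with $c_1(H) + \cdots + c_m(H) = 0$. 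Equivalently, each $K_i$ is reflectively symmetric about an affine hyperplane parallel to $H$ at signed distance $c_i(H)$. Running this for every hyperplane $H$ and applying Lemma~\ref{thm:ellipsoids} shows that each $K_i$ is a euclidean ball; writing $z_i$ for its center, the symmetry condition gives $c_i(H) = \langle z_i, n_H\rangle$, and then $\sum_i c_i(H) = 0$ for every direction $n_H$ yields $\sum z_i = 0$.

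The principal obstacle should be this equality analysis: extracting genuine constancy of the shift functions $c_i$ from a.e.\ pointwise identities, and then navigating the simultaneous constraints $\sum_i c_i(H) = 0$ over all hyperplanes $H$ to pinpoint the centers. The main inequality itself follows by routine combination of the one-dimensional rearrangement with Klain's theorem and continuity.
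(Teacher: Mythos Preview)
Your proposal is correct and follows essentially the same architecture as the paper's proof: Steiner symmetrization reduces matters to a one-dimensional inequality (your $\int_{I_1}\cdots\int_{I_m}\psi(\sum t_i)\,dt$, the paper's $|(E(Y)+v(Y))\cap F(Y,s)|\leq |E(Y)\cap F(Y,s)|$ after layer-cake), iteration and Hausdorff convergence give the main inequality, and the equality analysis shows the midpoint functions $c_i$ satisfy $\sum_i c_i(x_i')=0$, hence are constant, whence each $K_i$ is symmetric across a hyperplane parallel to $H$ and Lemma~\ref{thm:ellipsoids} finishes.

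The one genuine difference is the tool for the core one-dimensional step. You invoke Brascamp--Lieb--Luttinger on $\int \mathbf{1}_{[-a,a]}(\sum t_i)\prod_j \mathbf{1}_{I_j}(t_j)\,dt$; the paper instead proves directly that $|(E+v)\cap F|\leq |E\cap F|$ for a symmetric box $E$ and symmetric strip $F$ via the inclusion $E\cap F\supseteq \tfrac12[(E+v)\cap F]+\tfrac12[(E-v)\cap F]$ together with Brunn--Minkowski and the symmetry $|(E+v)\cap F|=|(E-v)\cap F|$. The paper's route is more self-contained and, importantly, makes the equality case transparent: one can pick $s$ so that $\partial F(Y,s)$ touches $\partial E(Y)$, which forces $\langle v(Y),(1,\ldots,1)\rangle=0$ immediately. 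Your BLL route is cleaner conceptually but leans on a sharper equality statement for BLL (essentially Burchard's characterization) that you have only sketched; if you keep this approach you should either cite that result or, since everything here reduces to intervals and a single linear functional, just supply the short direct argument the paper gives.
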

\begin{proof}
	The proof is a modification of the argument in \cite{Groemer}.  Without loss of generality we can assume $\Phi(0)=0$. Moreover, it will be convenient for us to assume that $\Phi$ is continuous at $0$ and continuous from the left in $(0,\infty)$. This does not add any restrictions to the problem, since changing the value of $\Phi$ at countable many points will not affect the integrals involved in \eqref{eq-C_1-1}.

 We first show that for every linear hyperplane $H\subseteq \RR^n$, 
	\begin{equation}\label{eq:SteinerM}
		M(K_1,\dots,K_m) \geq M(\St_H K_1,\dots,\St_H K_m). 
	\end{equation}
	 Let $P_H$ denote the orthogonal projection onto $H$ and let $u$ be a unit vector perpendicular to $H$. For each point $y\in P_H(K_i)$ there are numbers  $a_i=a_i(y)$  and $w_i=w_i(y)\geq 0$ such that 
	$$ K_i \cap (y+ H^\perp)= \{y+tu \colon  t\in a_i  + [-w_i,w_i]\}.$$
We may clearly assume that $u=e_n=(0,\dots,0,1)$. Set
$P:=P_H(K_1)\times\dots\times P_H(K_m)$. For $Y=(y_1,\dots,y_m)\in P$ and $s\geq 0$, define
\begin{eqnarray*}&&v(Y)=(a_1(y_1),\dots,a_m(y_m))\in \mathbb{R}^m,\\ &&E(Y)=[-w_1(y_1),w_1(y_1)]\times\dots\times[-w_m(y_m),w_m(y_m)]\subseteq \mathbb{R}^m,\\
&&F(Y,s)=\{z=(z_1,\dots,z_m)\in \mathbb{R}^m:\Phi(\|(y_1,z_1)+\cdots+(y_m,z_m)\|)\leq s\}.
\end{eqnarray*}	
 Let $0\leq s <\sup \Phi(\RR_+)$.
Notice that due to the assumption on $\Phi$, $F(Y,s)$ is the closed symmetric strip $$ \{z\in\mathbb{R}^m:|\langle z,(1,1,\dots,1)\rangle|\leq (\Phi^{-1}(s^*)^2-\|y_1+\dots+y_m\|^2)^{1/2}\}, $$
where $s^*:=\sup\{s'\leq s:s'\in\Phi(\RR_+)\}$. Observe also that, due to the fact that $\Phi$ is continuous from the left, $s^*\in\Phi(\RR_+)$. Moreover, $s^*=s$, if $s\in\Phi(\RR_+)$. Finally, if $s\geq \sup \Phi(\RR_+)$, then $F(Y,s)=\RR^m$.
	
Set $K_i^0:=S_H K_i$ and $K_i^1:=K_i$, $i=1,\dots,m$ and let $\varepsilon\in\{0,1\}$. By Fubini's theorem and the layer cake representation we have 
\begin{eqnarray*}
M(K_1^\varepsilon,\dots,K_m^\varepsilon)&=&\int_P\int_{E(Y)}\Phi(\|(y_1,z_1) +\cdots+(y_m,z_m)\|dzdY\\
&=&\int_P\int_0^\infty |(E(Y)+\varepsilon v(Y))\setminus F(Y,s)|dsdY\\
&=&\int_P\int_0^{\infty}(|E(Y)|-|(E(Y)+\varepsilon v(Y))\cap F(Y,s)|dsdY.
\end{eqnarray*} 
Thus, in order to establish \eqref{eq-C_1-1}, it suffices to prove that for $Y\in P$ and $s\geq 0$, it holds
\begin{equation}\label{eq-St-s-local}
|(E(Y)+ v(Y))\cap F(Y,s)|\leq |E(Y)\cap F(Y,s)|.
\end{equation}
Due to the convexity of $E(Y)$ and $F(Y,s)$, it holds
$$E(Y)\cap F(Y,s)\supseteq\frac{1}{2}[(E(Y)+v(Y))\cap F(Y,s)]+\frac{1}{2}[(E(Y)-v(Y))\cap F(Y,s)],$$while the symmetry of $E(Y)$ and $F(Y,s)$ yields $|(E(Y)+v(Y))\cap F(Y,s)|=|(E(Y)-v(Y))\cap F(Y,s)|$. Equation 
\eqref{eq-St-s-local} follows immediately by the previous observations and the Brunn-Minkowski inequality.

This finishes the proof of  \eqref{eq:SteinerM} and successive  Steiner symmetrizations imply inequality \eqref{eq-C_1-1}.

Next, assume that equality holds in \eqref{eq-C_1-1}. Then, equality must hold in \eqref{eq:SteinerM} and therefore equality holds in \eqref{eq-St-s-local}, for almost every $(Y,s)\in P\times[0,\infty)$. We argue that equality holds in \eqref{eq-St-s-local} for every choice of $Y\in P$ and $s\in \Phi(\RR_+)$. To this end, 
first notice that if for some $s\geq 0$ equality holds in \eqref{eq-St-s-local} for almost every $Y\in P$, then a continuity argument shows that equality must hold for {\it every} $Y\in P$. 
Set ${\cal D}\subseteq \RR_+$ to be the set of all $s\geq 0$, such that equality holds in \eqref{eq-St-s-local} for the pair $(Y,s)$, for all $Y\in P$. Then, as $\RR_+\setminus {\cal D}$ has measure zero, for a fixed  $s\in \Phi(\RR_+)$, there is a sequence $\{s_j\}$ from ${\cal D}$, such that $s_j\searrow s$. It is clear that for all $j$ it holds $s\leq s_j^*\leq s_j$, hence $s_j^*\to s$. Recall that the function $\Phi^{-1}:\Phi(\RR_+)\to\RR$ is continuous. Since $s^*_j\in \Phi(\RR_+)$, we conclude that $\Phi^{-1}(s^*_j)\to\Phi^{-1}(s)$. Thus for $\varepsilon\in\{0,1\}$ and for all $Y\in P$, we have
\begin{eqnarray*}
|(E(Y)+\varepsilon v(Y))\cap F(Y,s_j)|&=&  |(E(Y)+\varepsilon v(Y))\cap F(Y,s^*_j)|\\
&\to&|(E(Y)+\varepsilon v(Y))\cap F(Y,s)| 
\end{eqnarray*}and our claim follows.

For an arbitrary $Y=(y_1,\ldots,y_m)\in\textnormal{int}\, P$, set $$s:=\Phi(\|(y_1,w_1(y_1))+\dots+(y_m,w_m(y_m))\|)\in \Phi(\RR_+).$$ Then, $F(Y,s)$ contains $E(Y)$ and at the same time its boundary touches the boundary of $E(Y)$. Due to equality in \eqref{eq-St-s-local}, $F(Y,s)$ also contains $E(Y)+v(Y)$.
Evidently, this can only happen if 
$\langle v(Y), (1,1,\dots,1)\rangle=0$. 

Thus, we have shown that equality in \eqref{eq:SteinerM} implies
 
\begin{equation}\label{eq:centers} a_1(y_1)+\cdots +a_m(y_m)=0,\end{equation} for each 
interior point $y_i \in P_H K_i$. This shows  that each $a_i$ is a constant function of $y_i\in P_HK_i$.  Geometrically this means that $K_i$ is symmetric with respect to a hyperplane parallel to $H=e_n^\perp$. Since this is true for all linear hyperplanes $H$, it follows from Lemma~\ref{thm:ellipsoids} that $K_i$ is a euclidean ball. Let $z_i$ denote the center of the ball $K_i$. By considering  coordinate hyperplanes $H$  in equation  \eqref{eq:centers}, one obtains  $z_1+\cdots + z_m=0$. 
\end{proof}

An immediate consequence of Lemma \ref{lemma-C_1} is the following.
\begin{lemma}\label{lemma-C}
	Let $K_1,\dots,K_m$ be convex bodies in $\scf$ and $\lambda=(\lambda_1,\dots,\lambda_m)\in\scf^m$. Let $M_\lambda(K_1,\dots,K_m)$ denote the integral $$\int_{x_1\in K_1}\dots\int_{x_m\in K_m}\Phi(|x_1\cdot \lambda_1+\dots +x_m\cdot \lambda_m|)dx_1\dots dx_m.$$
	Then
	\begin{equation}\label{eq-C}
		M_\lambda(K_1,\dots,K_m)\geq M_\lambda(B_1,\dots,B_m),
	\end{equation}
	where $B_i$ is the euclidean ball with center at the origin and volume equal to the volume  of $K_i$.
	
	Let $J=\{ j \colon \lambda_j\neq 0\}$. If the convex bodies $K_1,\ldots, K_m$ have non-empty interior and equality holds in \eqref{eq-C}, then $K_j$ is for each $ j\in J$ a euclidean ball  and the centers $z_j$ of these balls satisfy 
	$$ \sum_{j\in J} z_j\cdot \lambda_j=0.$$
	
\end{lemma}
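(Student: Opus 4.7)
The plan is to reduce Lemma~\ref{lemma-C} to its real analogue, Lemma~\ref{lemma-C_1}, by an $\RR$-linear change of variables on each factor $\scf\cong\RR^p$. Let $J=\{j\colon\lambda_j\neq 0\}$. For $j\notin J$ the integrand is independent of $x_j$, so $\prod_{j\notin J}|K_j|=\prod_{j\notin J}|B_j|$ factors out of both sides of \eqref{eq-C} and it suffices to bound
$$\int_{\prod_{j\in J}K_j}\Phi\!\left(\left|\sum_{j\in J}x_j\cdot\lambda_j\right|\right)\prod_{j\in J}dx_j$$
from below by the same integral with each $K_j$ replaced by $B_j$.

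Next I would substitute $y_j=x_j\cdot\lambda_j$ for each $j\in J$. Right multiplication by $\lambda_j$ is an $\RR$-linear automorphism of $\scf\cong\RR^p$; by Lemma~\ref{lemma-determinant} (applied with $m=n=1$) its real Jacobian has absolute value $|\lambda_j|^p$, and the identity $\|v\cdot a\|=|a|\|v\|$ shows that it is a scalar multiple of an isometry. The substitution transforms the integrand into $\Phi(|\sum_{j\in J}y_j|)$ and the domain into $\prod_{j\in J}(K_j\cdot\lambda_j)$. Applying Lemma~\ref{lemma-C_1} in $\RR^p$ to the $|J|$ convex bodies $K_j\cdot\lambda_j$ produces a lower bound in terms of the centered euclidean balls of volume $|K_j\cdot\lambda_j|=|K_j|\,|\lambda_j|^p$. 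Since right multiplication by $\lambda_j$ is a scalar multiple of an isometry and fixes the origin, these balls are precisely $B_j\cdot\lambda_j$. Reversing the substitution then yields $M_\lambda(B_1,\dots,B_m)$, proving \eqref{eq-C}.

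For the equality case, equality in \eqref{eq-C} forces equality in the application of Lemma~\ref{lemma-C_1} above. The equality clause of that lemma gives that each $K_j\cdot\lambda_j$ with $j\in J$ is a euclidean ball and that the centers $c_j$ of these balls satisfy $\sum_{j\in J}c_j=0$. Applying once more that right multiplication by $\lambda_j^{-1}$ is a scalar multiple of an isometry, we deduce that $K_j=(K_j\cdot\lambda_j)\cdot\lambda_j^{-1}$ is a euclidean ball whose center $z_j$ satisfies $c_j=z_j\cdot\lambda_j$, whence $\sum_{j\in J}z_j\cdot\lambda_j=0$. Overall the proof is routine bookkeeping; the only point demanding attention is that right multiplication by a non-zero quaternion preserves centered euclidean balls, which is why the non-commutative norm identity is invoked rather than bare scalar-linearity.
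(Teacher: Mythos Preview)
Your proof is correct and follows essentially the same route as the paper: factor out the coordinates with $\lambda_j=0$, substitute $y_j=x_j\cdot\lambda_j$ (using that right multiplication by a non-zero element of $\scf$ is $|\lambda_j|$ times an isometry of $\RR^p$), and reduce to Lemma~\ref{lemma-C_1}. Your treatment is slightly more explicit about why euclidean balls are preserved under this substitution, but the argument is the same.
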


\begin{proof}

Notice that \eqref{eq-C} holds trivially as equality if $\lambda_1=\dots=\lambda_m= 0$, hence we may assume that not all $\lambda_i$ are equal to $0$. 
Suppose that  $\lambda_1,\dots,\lambda_l$ are all non-zero and while $\lambda_{l+1}=\dots=\lambda_m=0$. Then Lemma~\ref{lemma-C_1} yields 
\begin{align}\label{eq-lemma-C-last}\begin{split}
	M_\lambda(K_1,\dots,K_m) 
	&= |\lambda_1\cdots \lambda_l|^{-p} |K_{l+1}|\cdots|K_m| 	M(K_1\cdot \lambda_1,\dots,K_l\cdot \lambda_l) \\
&\geq |\lambda_1\cdots \lambda_l|^{-p} |K_{l+1}|\cdots|K_m| 	M(|\lambda_1| B_1,\dots,|\lambda_l|B_l)\\
&=M_{\lambda} (B_1,\ldots,B_m).\end{split}
\end{align}

If equality holds in \eqref{eq-C}, then equality must hold in \eqref{eq-lemma-C-last}. Lemma~\ref{lemma-C_1} applied to $K_1\cdot \lambda_1,\ldots, K_l\cdot \lambda_l$  implies the equality conditions.  
\end{proof}

The following key proposition establishes the monotonicity of $\mathscr{B}$ under   symmetrization with respect to 
complex or quaternionic linear hyperplanes.  
\begin{proposition}\label{prop:symm}Let $H\subseteq \scf^n$ be a complex or quaternionic linear hyperplane and  let $K$ be a convex body in $\scf^n$. Then  
\begin{equation}\label{eq-S-L}
\mathscr{B}(K_1,\ldots, K_n) \geq \mathscr{B}(\St_{H}K_1,\ldots, \St_{H}K_n).
\end{equation}
Moreover, if the bodies  $K_1,\ldots, K_n$ have non-empty interior and  equality holds in \eqref{eq-S-L}, then for any $(y_1,\ldots, y_n)$ belonging to a certain dense open subset of $P_H K_1\times \cdots \times P_H K_n$,  the intersections 
$ K_i\cap (y_i+H^\perp)-y_i\subseteq H^\perp$ are euclidean balls with centers $z_i$ satisfying 
$$ z_1\cdot \lambda_1 +\cdots + z_n \cdot \lambda_n=0$$
for certain non-zero scalars $\lambda_1,\ldots, \lambda_n\in \scf$ that depend as in Proposition~\ref{prop:row_exp} only on the $(n-1)\times n$-matrix with columns the coordinates of  $(y_1,\ldots, y_n)$  with respect to some orthonormal basis.
\end{proposition}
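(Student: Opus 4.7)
The plan is to use Fubini to reduce \eqref{eq-S-L} to the one-variable (over $\scf$) inequality of Lemma~\ref{lemma-C}, with the reduction hinging on a row expansion of the Dieudonn\'e determinant along the direction orthogonal to $H$. Fix an orthonormal basis $e_1, \ldots, e_n$ of $\scf^n$ with $e_1, \ldots, e_{n-1}$ spanning $H$ and $e_n \in H^\perp$, and decompose each integration variable as $x_i = y_i + e_n \cdot t_i$ with $y_i \in H$ and $t_i \in \scf$. In coordinates, the matrix with columns $x_1, \ldots, x_n$ has its first $n-1$ rows determined by $y_1, \ldots, y_n$ and its last row equal to $(t_1, \ldots, t_n)$.

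The first key step is to apply Proposition~\ref{prop:row_exp} to this matrix after moving the last row to the top; this cyclic permutation is harmless for $|\det|$, since permutation matrices have Dieudonn\'e determinant $1$ over $\HH$ and only introduce a sign over $\CC$. This yields
\begin{equation*}
|\det(x_1, \ldots, x_n)| = |t_1 \lambda_1 + \cdots + t_n \lambda_n|,
\end{equation*}
where $\lambda_j = \lambda_j(y_1, \ldots, y_n) \in \scf$ depend only on the $y_i$, and $[\lambda_j]$ equals the Dieudonn\'e determinant of the $(n-1)\times(n-1)$ submatrix whose columns are the coordinates of $(y_i)_{i \neq j}$ in the chosen basis of $H$. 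In particular, $\lambda_j \neq 0$ iff $(y_i)_{i \neq j}$ is $\scf$-linearly independent. Setting $C_i(y_i) := (K_i \cap (y_i + H^\perp)) - y_i \subseteq H^\perp \cong \scf$, Fubini then gives
\begin{equation*}
\mathscr{B}(K_1, \ldots, K_n) = \int_{P_H K_1 \times \cdots \times P_H K_n} M_{\lambda(y)}\bigl(C_1(y_1), \ldots, C_n(y_n)\bigr) \, dy,
\end{equation*}
with $M_\lambda$ as in Lemma~\ref{lemma-C}. The analogous identity for $\mathscr{B}(\St_H K_1, \ldots, \St_H K_n)$ differs only in that each $C_i(y_i)$ is replaced by the centered euclidean ball $B_i(y_i) \subseteq H^\perp$ of the same volume. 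Pointwise application of Lemma~\ref{lemma-C} and integration in $y$ then give \eqref{eq-S-L}.

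For the equality case, let $\Omega$ denote the set of $y \in P_H K_1 \times \cdots \times P_H K_n$ for which all $\lambda_j(y) \neq 0$. Since the $P_H K_i$ have non-empty interior in $H$ and linear independence of $n-1$ vectors in $H \cong \scf^{n-1}$ is generic, $\Omega$ is open and dense. Equality in \eqref{eq-S-L} forces Lemma~\ref{lemma-C} to hold with equality for a.e.\ $y$; a continuity argument parallel to the one used to propagate equality in \eqref{eq-St-s-local} in the proof of Lemma~\ref{lemma-C_1} then upgrades this to equality on all of $\Omega$. For each such $y$, the equality clause of Lemma~\ref{lemma-C} (applied with all $\lambda_j \neq 0$) identifies each $C_j(y_j)$ as a euclidean ball with some center $z_j \in H^\perp$ satisfying $z_1 \cdot \lambda_1 + \cdots + z_n \cdot \lambda_n = 0$, as claimed.

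The main obstacle is the handling of the row expansion over $\HH$: the Dieudonn\'e determinant admits no honest Laplace expansion, so the linearization in $(t_1, \ldots, t_n)$ is only available inside the absolute value, and the outputs of Proposition~\ref{prop:row_exp} live in the quotient $\HH^\times/[\HH^\times,\HH^\times] \cup \{0\}$. What rescues the argument is that the absolute value descends to this quotient, so that $|t_1\lambda_1 + \cdots + t_n\lambda_n|$ is well-defined for any choice of representatives $\lambda_j \in \HH$. A secondary verification, streamlined by Lemma~\ref{lemma:row_exp}, is that $\Omega$ is genuinely open and dense, which is essential for the continuity argument in the equality case.
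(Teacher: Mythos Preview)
Your proof is correct and follows essentially the same route as the paper's: Fubini over $P_HK_1\times\cdots\times P_HK_n$, the row-expansion Proposition~\ref{prop:row_exp} to linearize $|\det|$ in the $H^\perp$-coordinates, pointwise application of Lemma~\ref{lemma-C}, and for equality the same continuity-plus-genericity argument. The only cosmetic difference is that the paper places $H^\perp$ in the first coordinate (so no row permutation is needed before invoking Proposition~\ref{prop:row_exp}), and your closing remark about ``representatives'' slightly misstates the issue---the $\lambda_j$ in Proposition~\ref{prop:row_exp} are genuine elements of $\scf$, not cosets, so $|t_1\lambda_1+\cdots+t_n\lambda_n|$ is automatically well-defined---but this does not affect the argument.
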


\begin{proof} Choose coordinates so that $H=\{z\in \scf^n\colon z_1=0\}$.  
For a point  $x\in\scf^n$, we write $x=(a,y)$, where $a\in H^\perp=\scf$ and $y\in H=\scf^{n-1}$. Put $$K_i(y)=\{ a\in \scf\colon (a,y)\in  K_i \cap (y+ H^\perp)\}.$$ 
 Notice that for any $y_1,\ldots, y_n\in H$ there exist  by Proposition~\ref{prop:row_exp} scalars $\lambda_1,\dots,\lambda_n\in \scf$ depending only on $y_1,\ldots, y_n$ such that  
\begin{align}\label{eq:innerIntegral}\begin{split} 
\int_{a_1\in K_1(y_1)}\dots\int_{a_n\in K_n(y_n)}&\Phi( |\det ((a_n,y_n),\ldots,(a_n, y_n))|)\,da_1\cdots da_n\\
&= M_{\lambda}(K_1(y_1),\ldots,K_n(y_n))
\end{split} \end{align}
More precisely, the scalars $\lambda_i$  satisfy $|\lambda_i|= |\det A(1,i)|$, where $A$ denotes the matrix with columns $(a_i,y_i)$.  The identity \eqref{eq:innerIntegral} shows in particular that the function 
$$ (y_1,\dots,y_n)\mapsto  M_{\lambda}(K_1(y_1),\ldots,K_n(y_n))$$
 is in fact continuous in the interior of $P_H K_1\times \cdots\times P_H K_n $.

By Fubini's Theorem, one has
\begin{align*}
\mathscr{B}(K_1,\ldots, K_n) &=\int_{y_n\in P_H K_n }\cdots\int_{y_1\in P_H K_1 }  M_{\lambda}(K_1(y_1),\ldots,K_n(y_n)) dy_1\cdots d y_n 
\end{align*}
 Lemma~\ref{lemma-C} shows
\begin{equation}\label{eq-C-last}
M_{\lambda}(K_1(y_1),\ldots,K_n(y_n))  \geq M_{\lambda}(B_1(y_1),\ldots,B_n(y_n)),
\end{equation}
where $B_i(y_i)$ denotes the euclidean ball in $H^\perp$ with center at the origin and volume equal the volume of $K_i(y_i)$. From \eqref{eq-C-last} one immediately obtains inequality  \eqref{eq-S-L}.

Assume now that equality holds in \eqref{eq-S-L}. Then equality must hold in \eqref{eq-C-last}, for almost every $(y_1,\dots,y_n)\in P_H K_1\times \cdots\times P_H K_n $. 
By continuity, equality holds in \eqref{eq-C-last} for all $(y_1,\dots,y_n)$ in the interior of $P_H K_1\times \cdots\times P_H K_n$. The subset of those $A\in \Mat((n-1)\times n)$  such that at  least one  $(n-1)\times (n-1)$-submatrix is singular is by Proposition~\ref{prop:real_det} a real algebraic set. Thus there is an open dense subset of $P_H K_1\times \cdots\times P_H K_n$ such that all $(n-1)\times (n-1)$-submatrices of the $(n-1)\times n$ matrix formed by the columns $y_1,\dots, y_n$ are non-singular. By Proposition~\ref{prop:row_exp} this implies that the scalars $\lambda_1,\ldots, \lambda_n$ are all non-zero. Thus by Lemma \ref{lemma-C}, each  $K_i(y_i)\subseteq \scf$  has to be a euclidean ball and the centers of these balls have to satisfy $z_1\cdot \lambda_1 +\cdots + z_n\cdot \lambda_n=0$.  
\end{proof}

\begin{proof}[Proof of Theorem \ref{thm-BRS}] 

Choose a finite collection of (complex or quaternionic) linear hyperplanes of $\scf^n$ satisfying the hypothesis of  Theorem \ref{Bianchi-Gardner-Gronchi}. For example, the coordinate hyperplanes plus one additional hyperplane will do. Thus we obtain a sequence of symmetrizations  such  that $\St_{H_m}\circ\dots\circ S_{H_1}K_i$ converges for each $i\in \{1,\ldots,n\}$ to a euclidean ball centered at the origin. Since by Proposition~\ref{prop:symm} these symmetrizations  do not increase $\mathscr{B}$, it follows easily that \eqref{eq-thm-BRS} holds.

We are left with investigating when  equality occurs in \eqref{eq-thm-BRS}. Let $K_1,\ldots, K_n$  be convex bodies with non-empty interior  satisfying  $\mathscr{B}(K_1,\ldots, K_n)=\mathscr{B}(B_1,\ldots,B_n)$. Then due to the minimality of $\mathscr{B}(K_1,\ldots, K_n)$ and by \eqref {eq-S-L}, for any linear hyperplane $H$, equality holds in \eqref{eq-S-L}.  Thus, by Proposition~\ref{prop:symm} combined with an approximation argument, the intersection of $K_i$ with any affine (complex or quaternionic) line is either empty, a point or a euclidean ball.  This, together with Theorem~\ref{thm-complex-ellipsoids} and Theorem~\ref{thm-quaternionic-ellipsoids}, yields that each $K_i$ must be a complex or quaternionic ellipsoid. 

To finish  the proof of Theorem \ref{thm-BRS}, it remains to show that the ellipsoids  $K_1,\ldots, K_n$ are homothetic and centered at the origin. By choosing the coordinate system appropriately, we may assume that $K_1$ is the  translate of a euclidean ball, say $K_1= B+ z$. Let $H$ be a linear hyperplane perpendicular to $z$. Then $K_1(y)$ is for every  $y\in P_H K_1$ a euclidean ball with center $z$. By the minimality of the $K_i$, equality holds in \eqref{eq-S-L}. By Proposition~\ref{prop:symm}, the centers of the balls $K_i(y_i)$ satisfy 
$$ z_1\cdot \lambda_1  = - z_2\cdot \lambda_2 - \cdots - z_n \cdot \lambda_n$$
for $(y_1,\ldots, y_n)$ belonging to a certain open dense subset. 
For fixed $y_2,\ldots, y_n$, Lemma~\ref{lemma:row_exp} shows that $\lambda_1$ can be chosen independently of $y_1$, while  $\lambda_j$, $j\geq 2$, can be chosen so that there is a non-zero element $\mu^{-1}\in \scf$ such that the $\lambda_j\cdot \mu^{-1}$, $j\geq 2$, are linearly independent linear functions of $y_1\in \scf^{n-1}$.    First, since $z_1=z$ is constant,  this implies $z_1\cdot \lambda_1=0$ and hence $z_1=0$. Consequently, the linear independence implies $z_2=\cdots = z_n=0$. Since this holds for all $H$, we conclude using Lemma~\ref{thm:ellipsoids} that the $K_i$ are euclidean balls centered at the origin. 
In other words, the proof of Theorem \ref{thm-BRS} is complete.
\end{proof}

For later purposes let us compute the value of $\mathscr{B}$ on euclidean balls for specific weight functions $\Phi$. 

\begin{lemma} \label{lemma:Bballs}Let $\Phi(t)=t^r$, $r\geq 1$, and let $B^{np}\subseteq \scf^n$ be the euclidean unit ball. Then 
	\begin{align*} \mathscr{B}(B^{np}, \ldots, B^{np}) & =  \int_{B^{np}} \cdots \int_{B^{np}} |\det(x_1,\ldots, x_n)|^r dx_1\cdots dx_n\\
		& = (\kappa_{np+r})^n \prod_{j=0}^{n-1} \frac{\omega_{(n-j)p}}{\omega_{(n-j)p+r}}
	\end{align*}

	\end{lemma}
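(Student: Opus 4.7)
The plan is to evaluate the integral by iterated integration, peeling off one vector at a time using a ``base times height'' decomposition of $|\det|$.

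The first ingredient is the identity
\begin{equation*}
|\det(x_1,\ldots, x_n)| = |\det\nolimits_V(x_1,\ldots, x_{n-1})|\cdot \|P_{V^\perp}(x_n)\|,
\end{equation*}
valid whenever $V:=\mathrm{span}_\scf(x_1,\ldots, x_{n-1})$ has full $\scf$-dimension $n-1$. Here $P_{V^\perp}$ is the orthogonal projection onto the $\scf$-orthogonal complement of $V$ in $\scf^n$ (which coincides with the real orthogonal complement, since for any quaternionic subspace the real and $\scf$-perpendicularity conditions agree), and $\det_V$ denotes the Dieudonn\'e determinant computed in an orthonormal $\scf$-basis of $V$; by Corollary~\ref{cor:det_unitary} its absolute value is independent of the choice of basis. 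To prove the identity, I would choose an orthonormal $\scf$-basis of $\scf^n$ whose first $n-1$ vectors span $V$. In these coordinates the matrix with columns $x_1,\ldots,x_n$ is upper block-triangular with an $(n-1)\times(n-1)$ block $A'$ on the diagonal and a $1\times 1$ block $a\in \scf$ with $|a|=\|P_{V^\perp}(x_n)\|$; elementary column operations using Theorem~\ref{thm:Ddet} clear out the upper-right block and give $|\det|=|\det A'|\cdot |a|$. This argument works uniformly over $\scf\in\{\RR,\CC,\HH\}$.

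The second ingredient is the evaluation
\begin{equation*}
\int_{B^N} \|P_L(x)\|^r\, dx = \kappa_{N+r}\cdot \frac{\omega_k}{\omega_{k+r}}
\end{equation*}
for every real subspace $L\subseteq \RR^N$ of dimension $k$. By rotational invariance of $B^N$ the integral depends only on $k$. Polar coordinates on the sphere, splitting a unit vector into its $L$- and $L^\perp$-components, reduce this to a Beta integral, and the standard identities $\kappa_m=\pi^{m/2}/\Gamma(m/2+1)$ and $\omega_m=2\pi^{m/2}/\Gamma(m/2)$ put the resulting quotient of Gamma functions into the stated form.

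Combining the two ingredients via Fubini, I would integrate in the order $x_n, x_{n-1},\ldots, x_1$. At the $j$-th step (integrating $x_{n-j+1}$) the orthogonal complement appearing has $\scf$-dimension $j$, hence real dimension $jp$, contributing a factor $\kappa_{np+r}\cdot \omega_{jp}/\omega_{jp+r}$. Multiplying these $n$ factors gives
\begin{equation*}
\mathscr{B}(B^{np},\ldots, B^{np}) = (\kappa_{np+r})^n \prod_{j=1}^{n} \frac{\omega_{jp}}{\omega_{jp+r}} = (\kappa_{np+r})^n \prod_{j=0}^{n-1} \frac{\omega_{(n-j)p}}{\omega_{(n-j)p+r}}.
\end{equation*}
The base-times-height formula fails on the locus where the first $n-1$ vectors are $\scf$-linearly dependent, but this is a proper real algebraic subset by Proposition~\ref{prop:real_det} and hence of Lebesgue measure zero, so it does not affect the iterated integral. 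The main point that requires care is verifying the base-times-height factorization for the Dieudonn\'e determinant in the quaternionic setting; once that is in hand, the remainder of the computation is routine.
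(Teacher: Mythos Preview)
Your argument is correct. Both you and the paper prove the lemma by iteratively peeling off one vector via a base-times-height factorization of $|\det|$, but the implementations differ in two respects.

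First, you use the factorization $|\det(x_1,\ldots,x_n)|=|\det_V(x_1,\ldots,x_{n-1})|\cdot\|P_{V^\perp}x_n\|$ with $V=\mathrm{span}_\scf(x_1,\ldots,x_{n-1})$, whereas the paper uses the dual form $|\det(x_1,\ldots,x_n)|=\|x_n\|\cdot|\det_{x_n^\perp}(P_{x_n^\perp}x_1,\ldots,P_{x_n^\perp}x_{n-1})|$. Second, and more substantively, the paper does not integrate over $B^{np}$ directly but instead computes the Gaussian integral $I_n=\int_{(\scf^n)^n}|\det|^r e^{-\sum\|x_i\|^2}\,dx$ recursively, and only at the end relates $I_n$ to the ball integral via homogeneity. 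Your route avoids this detour by evaluating $\int_{B^{np}}\|P_L x\|^r\,dx$ as a Beta integral, which is arguably more direct. The Gaussian approach has the minor advantage that after projecting onto $x_n^\perp$ the weight factorizes cleanly and the inner integral is literally $I_{n-1}$ up to a constant, so one never has to track the subspace $V$ depending on $x_1,\ldots,x_{n-1}$; in your approach this dependence is harmless because the ball integral of $\|P_L x\|^r$ depends only on $\dim_\RR L$, but it is worth making that point explicit in the iteration.
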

\begin{proof}
	Put 
	$$ I_n  = \int_{\scf^n} \cdots \int_{\scf^n}  |\det(x_1,\ldots, x_n)|^r e^{-\sum_{i=1}^n |x_i|^2 } dx_1\cdots dx_n.$$ 
	Observe that 
	$$ I_1 = \frac{\omega_{p}}{\omega_{p+r}} \pi^\frac{p+r}{2}.$$
	By Fubini's theorem
	\begin{align*} 
	I_n & = \int_{\scf^n} |x_n |^{r  } e^{-|x_n|^2}  \left( \int_{(\scf^n)^{n-1}} |{\det}_{x_n^\perp}(P_{x_n^\perp} x_1, \cdots, P_{x_n^\perp} x_{n-1})  e^{-\sum_{i=1}^{n-1} |x_i|^2 }dx_1 \cdots dx_{n-1} \right) dx_n\\
	& = 
	\frac{\omega_{np}}{\omega_{np+r}} \pi^{\frac{np+r}{2}+ \frac{p(n-1)}{2}}	
	 \cdot  I_{n-1}  \\
	 &= \frac{\omega_{np}}{\omega_{np+r}} \pi^{\frac{r-p}{2}+ np}	
	 \cdot  I_{n-1}
		\end{align*} 
	and hence
	$$ I_n =  \pi^{\frac{n(np+r)}{2}} \prod_{j=0}^{n-1}\frac{\omega_{(n-j)p}}{\omega_{(n-j)p+r}}$$
	On the other hand,
	\begin{align*} I_n  &=  2^{-n}  \Gamma(\frac{np+r}{2})^n  \int_{S^{np-1}} \cdots \int_{S^{np-1}} |\det(u_1,\ldots, u_n)|^r du_1\cdots du_n\\
		&=  \Gamma(\frac{np+r}{2}+1)^n  \int_{B^{np}} \cdots \int_{B^{np}} |\det(x_1,\ldots, x_n)|^r dx_1\cdots dx_n\\
		& = \frac{\pi^\frac{n(np+r)}{2}}{(\kappa_{np+r})^n} \int_{B^{np}} \cdots \int_{B^{np}} |\det(x_1,\ldots, x_n)|^r dx_1\cdots dx_n.
				\end{align*}
	
\end{proof}

\section{The linear Blaschke--Petkantchin formula}\label{sec-BP}

Let $V$ be an $n$-dimensional inner product  space over $\scf$. Let   $m\leq n$ be an integer.  Suppose  that $(v_1,\ldots, v_m)$ is  a tuple of vectors from $V$. Choose an $m$-dimensional linear subspace $E$ containing the vectors $v_1,\ldots, v_m$. We define 
$$ |\det(v_1,\ldots, v_m)|= |\det f|,$$
where $f\colon E\to E$ is the linear map defined by $f(e_i)=v_i$ where $e_1,\ldots,e_m$ is an orthonormal basis of $E$. Note that if the vectors  $v_1,\ldots,v_m$ are linearly independent, then $E$ is unique; if they are linearly dependent, then any choice of $E$ will give $\det f=0$. Moreover, since unitary transformations satisfy $|\det f|=1$, the definition does not depend on the choice of orthonormal basis of $E$.

The general linear group $\GL(m,\scf)$ acts on the set $m$-tuples of vectors from $E$ from the  right by 
$$ (v_1,\ldots, v_m)A = (
\sum_{r=1}^m v_r A_{r1}, \ldots,\sum_{r=1}^m v_r A_{rm}).$$

\begin{lemma}\label{lemma:det_right_action} For $A\in \GL(m,\scf)$ one has
	$$ |\det((v_1,\ldots, v_m)A)|= |\det A| |\det(v_1,\ldots, v_m)|.$$
\end{lemma}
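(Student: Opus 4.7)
The claim is essentially the statement that $|\det|$ is multiplicative, transported from matrices acting on $\scf^m$ to $m$-tuples living in $V$. The plan is to factor the new tuple through the old one via an endomorphism of a common $m$-dimensional subspace $E$ and then apply property (2) of Theorem~\ref{thm:Ddet}.

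First I would split into cases according to whether $v_1,\dots,v_m$ are linearly independent. If they are linearly dependent, then $|\det(v_1,\dots,v_m)|=0$ by definition, and the columns of $(v_1,\dots,v_m)A$ are again contained in the span of $v_1,\dots,v_m$, which has dimension less than $m$; hence the transformed tuple is also linearly dependent and its determinant is $0$ as well, so both sides of the claimed identity vanish.

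Assume now that $v_1,\dots,v_m$ are linearly independent. Then they span a unique $m$-dimensional subspace $E\subseteq V$, and because $A\in\GL(m,\scf)$ the vectors $(v_1,\dots,v_m)A$ are also linearly independent and lie in $E$, so they span the same $E$. Fix an orthonormal basis $e_1,\dots,e_m$ of $E$ and let $f\colon E\to E$ be the $\scf$-linear map with $f(e_i)=v_i$, so that $|\det(v_1,\dots,v_m)|=|\det f|$ by definition. Let $g\colon E\to E$ be the $\scf$-linear map whose values on $e_1,\dots,e_m$ are the columns of $(v_1,\dots,v_m)A$, so that $|\det((v_1,\dots,v_m)A)|=|\det g|$. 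Finally, let $\phi\colon E\to E$ be defined by $\phi(e_j)=\sum_r e_r\cdot A_{rj}$; by the conventions of Section~\ref{sec:linAlg}, the matrix of $\phi$ in the basis $(e_i)$ is exactly $A$, hence $|\det\phi|=|\det A|$.

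The key identity is $g=f\circ\phi$: using $\scf$-right-linearity of $f$,
$$g(e_j)=\sum_{r}v_r\cdot A_{rj}=\sum_{r}f(e_r)\cdot A_{rj}=f\!\left(\sum_{r}e_r\cdot A_{rj}\right)=f(\phi(e_j)).$$
By Theorem~\ref{thm:Ddet}(2), $\det g=\det f\cdot\det\phi$ in $\scf^\times/[\scf^\times,\scf^\times]\cup\{0\}$, and the absolute value is well defined on this quotient (over $\HH$ because commutators have unit norm, as noted in Section~\ref{sec:det}), so taking $|\cdot|$ gives $|\det g|=|\det f|\,|\det\phi|$, which is exactly the claim. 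The only mild subtlety is keeping straight that $f$ acts on $E$ while $A$ acts on coordinates, and that the right-module convention produces a genuine composition rather than a transpose; beyond that, no calculation is required.
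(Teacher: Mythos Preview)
Your proof is correct and follows essentially the same route as the paper: the paper writes the coordinate matrix $Q$ of $(v_1,\dots,v_m)$ in an orthonormal basis of $E$, observes that the coordinate matrix of $(v_1,\dots,v_m)A$ is $QA$, and applies multiplicativity of $|\det|$; your maps $f$, $\phi$, $g=f\circ\phi$ have matrices $Q$, $A$, $QA$ respectively, so the two arguments are identical up to language. The only cosmetic difference is that the paper treats the dependent and independent cases uniformly via a single matrix computation, whereas you handle the degenerate case separately.
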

\begin{proof} Fix an orthonormal basis $e_1,\ldots, e_m$ of a subspace $E$ containing the vectors $v_1,\ldots, v_m$. If $Q\in \Mat(m,\scf)$ denotes the matrix  with coefficients $Q_{ij} = \langle e_i,v_j\rangle$, then
	$ |\det(v_1,\ldots, v_m)| = |\det Q|$. Thus 
	$$|\det((v_1,\ldots, v_m)A)| = |\det (QA)| = |\det A| |\det(v_1,\ldots, v_m)|,$$
	as claimed. 
\end{proof}

\begin{theorem}\label{thm-BP}
	Let $m\in\{1,\dots,n-1\}$ and let $p\in \{1,2,4\}$  denote the dimension of  $\scf$ over $\R$. 	For any Borel measurable  function $f:(\scf ^n)^m\to [0,\infty)$
	\begin{align*} &\int_{\scf^n}\dots\int_{\scf^n} f(x_1,\dots,x_m)dx_1\dots dx_m\\
		& =c_{m,n}\int_{\Gr_{m}(n,\scf )}\int_E\dots\int_Ef(x_1,\dots,x_m)|\det(x_1,\dots,x_m)|^{(n-m)p}dx_1\dots dx_mdE,\end{align*}
	where 
	$$c_{m,n}=\prod_{j=0}^{m-1} \frac{\omega_{(n-j)p}}{\omega_{(m-j)p}}  $$
\end{theorem}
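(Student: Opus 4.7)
My plan is to prove Theorem~\ref{thm-BP} via the standard two-step recipe: identify both sides as positive linear functionals on $C_c((\scf^n)^m)$ that are relatively invariant under a common group action with the same modulus, invoke Nachbin's uniqueness theorem (Theorem~\ref{thm-uniqueness}) to deduce proportionality, and then compute the proportionality constant by testing on a convenient function.

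First, I would restrict attention to the open dense subset $X\subseteq (\scf^n)^m$ of linearly independent $m$-tuples (the complement is Lebesgue-null and annihilates the right-hand integrand). The group $G=\U(n,\scf)\times\GL(m,\scf)$ acts on $X$ by $(U,A)\cdot(x_1,\dots,x_m)=(Ux_1,\dots,Ux_m)A$, and this action is transitive: the unitary group acts transitively on orthonormal $m$-frames, and any other basis of an $m$-dimensional subspace differs from an orthonormal one by a $\GL(m,\scf)$-transformation.

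Second, I would verify that both sides are $\U(n,\scf)$-invariant and transform under $A\in\GL(m,\scf)$ by the common modulus $\Delta(A)=|\det A|^{-np}$. Invariance under $\U(n,\scf)$ is immediate from unitary invariance of Lebesgue measure, of the Grassmannian Haar measure, and of $|\det(x_1,\dots,x_m)|$. For the $\GL(m,\scf)$-modulus: on the left-hand side, right multiplication by $A$ on $(\scf^n)^m\cong\Mat(n\times m,\scf)$ has real Jacobian $|\det A|^{np}$ by Lemma~\ref{lemma-determinant} (applied with the roles of $m$ and $n$ interchanged), so Lebesgue measure pulls back by $|\det A|^{-np}$. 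On the right-hand side, the analogous change of variables on $E^m\cong\Mat(m\times m,\scf)$ contributes $|\det A|^{-mp}$, and by Lemma~\ref{lemma:det_right_action} the factor $|\det(x_1,\dots,x_m)|^{(n-m)p}$ contributes an additional $|\det A|^{-(n-m)p}$, for a combined modulus of $|\det A|^{-np}$. Theorem~\ref{thm-uniqueness} then yields $\mathrm{LHS}=c\cdot\mathrm{RHS}$ for some positive constant $c$.

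Third, I would pin down $c$ by testing the identity on $f=\mathbf{1}_{B^{np}}(x_1)\cdots\mathbf{1}_{B^{np}}(x_m)$, where $B^{np}$ denotes the euclidean unit ball in $\scf^n$. The left-hand side equals $\volb_{np}^m$. For the right-hand side, $E\cap B^{np}$ is the euclidean unit ball of $E$ and the inner integrand is unitarily invariant, so the Grassmannian probability integral reduces to a single integral over the unit ball in $\scf^m$ with weight $|\det(x_1,\dots,x_m)|^{(n-m)p}$. Applying Lemma~\ref{lemma:Bballs} with $m$ in place of $n$ and $r=(n-m)p$ evaluates this integral to $\volb_{np}^m\prod_{j=0}^{m-1}\vols_{(m-j)p}/\vols_{(n-j)p}$. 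Equating the two expressions gives $c=\prod_{j=0}^{m-1}\vols_{(n-j)p}/\vols_{(m-j)p}=c_{m,n}$. The main obstacle is the bookkeeping around the modulus: one must correctly account for how right multiplication by $A\in\GL(m,\scf)$ scales the various real Lebesgue measures in play, which over the quaternions requires the care encoded in Lemma~\ref{lemma-determinant}. Once this is settled, transitivity of the $G$-action, the appeal to Nachbin's uniqueness theorem, and the closed-form ball integral are each straightforward.
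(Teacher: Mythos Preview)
Your proposal is correct and follows essentially the same approach as the paper: the same group $G=\U(n,\scf)\times\GL(m,\scf)$ acting transitively on linearly independent $m$-tuples, the same modulus computation via Lemma~\ref{lemma-determinant} and Lemma~\ref{lemma:det_right_action}, the same appeal to Nachbin's uniqueness (Theorem~\ref{thm-uniqueness}), and the same evaluation of the constant via the unit ball and Lemma~\ref{lemma:Bballs}. The only cosmetic difference is that the paper writes the action as $(U,A)\cdot x=UxA^{-1}$ so that it is a genuine \emph{left} action of the product group, whereas your formulation $(U,A)\cdot x=UxA$ mixes a left action of $\U(n,\scf)$ with a right action of $\GL(m,\scf)$; this is harmless for the argument but worth stating precisely when you invoke Theorem~\ref{thm-uniqueness}.
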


\begin{proof}
	The proof follows closely the proof of the Blaschke--Petkantchin formula in the real setting, see \cite[Abschnitt 7.3]{SchneiderWeil:StochastischeGeom}. Set $G=\U(n,\scf )\times \GL(m,\scf )$ and $X$ to be the set of all linearly independent $m$-tuples from $\scf ^n$. Define the action
	$$G\times X\ni ((U,A),(x_1,\dots,x_m))\mapsto U(x_1,\dots,x_m)A^{-1}.$$
	It is clear that $G$ acts transitively on $X$. 
	
	Observe that the complement of $X$ is contained in the vanishing set of all $m\times m$ minors of the matrix with columns $(x_1,\ldots,x_m)$. In particular, the complement of $X$ has zero Lebesgue measure.
	Let $I_1,I_2:C_c(X)\to\R$ be the functionals defined by
	$$I_1(f)=\int_{\scf ^n}\dots\int_{\scf ^n}f(x_1,\dots,x_m)dx_1\dots dx_m$$
	and
	$$I_2(f)=\int_{\Gr_m(n,\scf)}\int_E\dots\int_Ef(x_1,\dots,x_m)|\det(x_1,\dots,x_m)|^{(n-m)p}dx_1\dots dx_mdE.$$ 
	
	Fix $(U,A)\in G$ and $f\in C_c(X)$. Using Proposition~\ref{prop:real_det} and  Lemma~\ref{lemma-determinant}, we compute
	\begin{eqnarray*}
		I_1((U,A)\cdot f)&=&\int_{\scf ^n}\dots\int_{\scf ^n}f(U^{-1}(x_1,\dots,x_m)A)dx_1\dots dx_m\\
		&=&\int_{\scf ^n}\dots\int_{\scf ^n}f((x_1,\dots,x_m)A)dx_1\dots dx_m\\
		&=&|\det A|^{-np} \; I_1(f).
	\end{eqnarray*}
	On the other hand, notice that
	$$ |\det(U^{-1} x_1,\ldots, U^{-1} x_m)|  = |\det( x_1,\ldots, x_m)|$$
	for $U\in \U(n,\scf)$. Using Lemma~\ref{lemma:det_right_action} and Lemma~\ref{lemma-determinant}, we obtain 
	\begin{align*} &I_2((U,A)\cdot f)\\&  =\int_{\Gr_m(n,\scf)}\int_E\dots\int_Ef((U^{-1}x_1,\dots,U^{-1} x_m)A)|\det(U^{-1}x_1,\dots,U^{-1}x_m)|^{(n-m)p}dx_1\dots dx_mdE\\
		& = \int_{\Gr_m(n,\scf)}\int_E\dots\int_E f((x_1,\dots, x_m)A)|\det(x_1,\dots,x_m)|^{(n-m)p}dx_1\dots dx_mdE\\
		& =|\det A |^{-(n-m)p}\int_{\Gr_m(n,\scf)}\int_E\dots\int_E f((x_1,\dots, x_m)A)|\det((x_1,\dots,x_m)A)|^{(n-m)p}dx_1\dots dx_mdE\\
		& =|\det A|^{-np}\int_{\Gr_m(n,\scf)}\int_E\dots\int_E f(x_1,\dots, x_m)|\det(x_1,\dots,x_m)|^{(n-m)p}dx_1\dots dx_mdE
	\end{align*}

	We conclude that
	 $I_1$ and $I_2$ are relatively invariant positive integrals with the same modulus. Theorem~\ref{thm-uniqueness} implies that $I_1$ and $I_2$ are proportional.
	
To determine the factor of proportionality, we choose $f$ to  be the indicator function of the product of the euclidean unit balls  $B^{np}\times \cdots \times B^{np}$ and use Lemma~\ref{lemma:Bballs}.
\end{proof}

We are now ready to prove Theorem~\ref{thm-BI}. 

\begin{proof}[Proof of Theorem~\ref{thm-BI}] Applying the Blaschke--Petkantchin formula to the function $f(x_1,\ldots,x_m)=  \mathbf 1_{K_1\times \cdots \times K_m}(x_1,\ldots, x_m)$ yields
	$$ |K_1|\cdots |K_m| = c_{m,n} \int_{\Gr_m(n,\scf)} 
	 \mathscr{B}(K_1\cap E,\cdots, K_m\cap E) 
	 dE$$
for  $\Phi(x)= x^{(n-m)p}$. Since $\Phi$ is homogeneous, the
 Busemann random simplex inequality (Theorem~\ref{thm-BRS}) and Lemma~\ref{lemma:Bballs}  imply
\begin{equation}\label{eq:app_BRI}\mathscr{B}(K_1\cap E,\cdots, K_m\cap E)\geq b_{m,n}
	  |K_1\cap E|^{n/m}\cdots |K_m\cap E|^{n/m}\end{equation} 
with the  constant 
$$b_{m,n}= \frac{(\kappa_{np})^m}{(\kappa_{mp})^n} \prod_{j=0}^{m-1} \frac{\omega_{(m-j)p}}{\omega_{(n-j)p}}
.$$  This yields   the desired inequality.  

If equality holds in \eqref{eq:thm-BI}, then equality holds in \eqref{eq:app_BRI} for all $E\in \Gr_m(n,\scf)$. If $m=1$, then the intersection of $K_1$ with each line $E$ is a centered euclidean ball. In other words, $K_1$ is invariant under multiplication by  scalars of absolute value $1$. If $m\geq 2$, then the bodies $K_1\cap E, \ldots, K_m\cap E$ are homothetic ellipsoid centered at the origin. Since every real $2$-dimensional linear subspace is contained in some subspace $E$, this implies first   that the bodies  $K_i$  are ellipsoids centered at the origin. As in real case one concludes that these ellipsoids must be homothetic.

\end{proof}

\section{Affine quermassintegrals}
\label{sec:invariance}

In the real case, the results of this section were established by Grinberg~\cite{Grinberg:Isoperimetric} using the theory of invariant integration on locally compact groups. Moreover, it is mentioned on page 79 of \cite{Grinberg:Isoperimetric} that a routine modification of this argument yields the complex case.   
Here we reprove the invariance of the affine quermassintegrals using the description of the density line bundle of the Grassmannian stated in Proposition~\ref{prop:densities}. Our proof works simultaneously for reals, the complex numbers, and the quaternions.

Throughout this section, let $p\in \{1,2,4\}$ denote the dimension of $\scf$ over $\RR$.
For every convex body $K$  in $\scf^n$ with non-empty interior  and $m\in \{1,\ldots, n\}$ we define
the  (real, complex, quaternionic) \emph{$m$-th affine  quermassintegral} of $K$ as 
$$ \Phi_m^\scf(K)= \int_{\Gr_m(n,\scf)} |P_E K|^{-n} dE.$$
Here $P_E\colon \scf^n\to E$ denotes the orthogonal projection and $|P_E K|$ denotes the (euclidean) volume of $P_EK$ in $E$.

Replacing projections by intersections, we define the  (real, complex, quaternionic) \emph{$m$-th dual affine quermassintegral} of $K$ as 
$$\wt \Phi_m^\scf(K)= \int_{\Gr_m(n,\scf)} |K\cap E|^{n} dE.$$
where $|K\cap E|$ denotes the (euclidean) volume of $K\cap E$ in $E$. 

\begin{theorem}\label{thm-invariance} Let $m\in\{1,\ldots,n\}$. 
	For every convex body $K$  in $\scf^n$ with non-empty interior  and every $g\in \SL(n,\scf)$ it holds
	$$ \Phi_m^\scf(gK)=  \Phi_m^\scf(K)\quad \text{and}\quad  \wt \Phi_m^\scf(gK)= \wt \Phi_m^\scf(K).$$
\end{theorem}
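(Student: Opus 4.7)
The strategy is to derive a transformation formula for the Haar measure on $\Gr_m(n,\scf)$ under the map $\phi_g:E\mapsto gE$ induced by $g\in\GL(n,\scf)$, and then verify $\SL(n,\scf)$-invariance by change of variables. Using the $\Stab_{\GL(V)}(E)$-equivariant isomorphism of Proposition~\ref{prop:densities}, the Haar density at $E$ corresponds up to a scalar to $\omega_V^{\otimes m}\otimes\omega_{E^*}^{\otimes n}$, where $\omega_V$ and $\omega_E$ denote the euclidean densities. By Proposition~\ref{prop:real_det}, pullback of $\omega_V$ under $g:V\to V$ multiplies by $|\det g|^p$, and pullback of $\omega_{gE}$ under $g|_E:E\to gE$ multiplies by $|\det(g|_E)|^p$ (determinants being computed in orthonormal bases). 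Combining these contributions, naturality of the isomorphism yields the key transformation rule
$$\phi_g^*\, dE \;=\; |\det g|^{pm}\,|\det(g|_E)|^{-pn}\, dE.$$

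For $\wt\Phi_m^\scf$, I substitute $E=gE'$ in the defining integral and note that the euclidean volume of $g(K\cap E')$ inside $gE'$ equals $|\det(g|_{E'})|^p\,|K\cap E'|$. The transformation rule then gives $\wt\Phi_m^\scf(gK)=|\det g|^{pm}\,\wt\Phi_m^\scf(K)$, since the factors $|\det(g|_{E'})|^{\pm pn}$ cancel. For $g\in\SL(n,\scf)$ this is the desired invariance.

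For $\Phi_m^\scf$, a direct substitution $E=gE'$ does not work because $P_{gE}$ and $g\circ P_E$ disagree in general. Instead I use the substitution $E=g^{-*}E'$ with $g^{-*}:=(g^*)^{-1}$, exploiting the orthogonal identity $(g^{-*}E')^\perp=gE'^\perp$. Defining $\tilde g_{E'}:=P_{g^{-*}E'}\circ g|_{E'}:E'\to g^{-*}E'$, I verify $P_{g^{-*}E'}(gK)=\tilde g_{E'}(P_{E'}K)$ by showing that both equal $g^{-*}E'\cap g(K+E'^\perp)$, using the identity $P_F K=F\cap(K+F^\perp)$. Computing in orthonormal bases of $E'$ and $g^{-*}E'$, the matrix of $\tilde g_{E'}$ is the Hermitian conjugate of the matrix of $g^*|_{g^{-*}E'}:g^{-*}E'\to E'$, so Proposition~\ref{prop:adjoint} gives
$$|\det\tilde g_{E'}|\;=\;|\det(g^*|_{g^{-*}E'})|\;=\;|\det(g^{-*}|_{E'})|^{-1},$$
where the last step uses that these two maps are mutual inverses. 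Hence $|P_{g^{-*}E'}(gK)|=|\det(g^{-*}|_{E'})|^{-p}|P_{E'}K|$, and combined with $|\det g^{-*}|=|\det g|^{-1}$, the transformation rule produces $\Phi_m^\scf(gK)=|\det g|^{-pm}\,\Phi_m^\scf(K)$, which is again invariant on $\SL(n,\scf)$.

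The main obstacle will be the projection case, specifically the geometric identity $P_{g^{-*}E'}(gK)=\tilde g_{E'}(P_{E'}K)$ together with the adjoint bookkeeping needed to compute $|\det\tilde g_{E'}|$ in the possibly non-commutative setting; the intersection case is essentially a direct substitution once the transformation rule for $dE$ is established.
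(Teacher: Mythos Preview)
Your proof is correct and rests on the same key input as the paper, namely Proposition~\ref{prop:densities}, but you unpack it differently. The paper never writes down the Jacobian of the $\GL(n,\scf)$-action on $\Gr_m(n,\scf)$; instead it observes that Proposition~\ref{prop:densities} forces the line bundle $\wt\tau_m$ with fibre $\Dens(T_E\Gr_m)\otimes\Dens(E)^{\otimes n}\simeq\Dens(V)^{\otimes m}$ to admit an $\SL(n,\scf)$-invariant section $\wt\sigma_m=\rho_m\otimes(\vol_E)^{\otimes n}$, and then pairs this with the $K$-dependent section $\wt\kappa(K)_E=(K\cap E)^{\otimes n}$ to get $\wt\Phi_m^\scf$. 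You instead derive the explicit transformation rule $\phi_g^*\,dE=|\det g|^{pm}|\det(g|_E)|^{-pn}\,dE$ and carry out a direct change of variables; this is precisely the formula encoded by the invariance of $\wt\sigma_m$, so for the dual quermassintegral the two arguments are equivalent reformulations of one another.

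The genuine difference is in the projection case. The paper switches to $\Gr_{n-m}(n,\scf)$ and works with the quotient $V/F$ rather than the orthogonal complement $F^\perp$: since the image of $K$ in $V/F$ transforms naturally under $\GL(n,\scf)$, no adjoint enters and the argument is formally identical to the dual case. Your route, staying on $\Gr_m(n,\scf)$ and substituting $E=g^{-*}E'$, is more hands-on: it requires the orthogonality identity $(g^{-*}E')^\perp=gE'^\perp$, the verification that $P_{g^{-*}E'}(gK)=\tilde g_{E'}(P_{E'}K)$, and the adjoint determinant computation via Proposition~\ref{prop:adjoint}. Both are valid; the paper's quotient formulation is cleaner and makes the parallel between $\Phi_m^\scf$ and $\wt\Phi_m^\scf$ manifest, while your approach has the merit of producing the explicit homogeneity degrees $\Phi_m^\scf(gK)=|\det g|^{-pm}\Phi_m^\scf(K)$ and $\wt\Phi_m^\scf(gK)=|\det g|^{pm}\wt\Phi_m^\scf(K)$ for general $g\in\GL(n,\scf)$ along the way. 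One small point: when you invoke ``naturality of the isomorphism'' in Proposition~\ref{prop:densities}, note that the proposition as stated only asserts $\Stab_{\GL(V)}(E)$-equivariance at a single fibre; the full $\GL(V)$-equivariance of the bundle isomorphism (which you need to transport between fibres $E$ and $gE$) follows from the construction in the two lemmas proving the proposition, and the paper uses it in the same implicit way.
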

We remark that $\wt \Phi_m^\scf(K)$ is defined and  invariant under $\SL(n,\scf)$ maps even if $K$ is assumed to be only compact.
\begin{proof}[Proof of Theorem \ref{thm-invariance}]
	Let us first treat the case of the dual affine quermassintegral, since it is slightly easier. 
	Let $\wt \tau_m$ denote the line bundle over $\Gr_m(n,\scf)$ with fiber 
	$$\wt \tau_m|_E=  \Dens(T_E \Gr_m(n,\scf))\otimes \Dens(E)^{\otimes n}$$ 
	over $E$. By Proposition~\ref{prop:densities}, there is a $\Stab_{\GL(n,\scf)}(E)$-equivariant 
	isomorphism
	$$ \wt \tau_m|_E\simeq  \Dens(V)^{\otimes m}.$$
	Thus the action of $\operatorname{Stab}_{\SL(n,\scf)}(E)$ on the fiber $\wt \tau_m|_{E}$ is trivial and  hence there exists a non-trivial $\SL(n,\scf)$-invariant section $\wt \sigma_m$ of  $\wt \tau_m$.  Since $\wt \sigma_m$ is in particular $\U(n,\scf)$-invariant, we may assume that   
	\begin{equation}\label{eq:inv_sec_dual} \wt \sigma_m =\rho_m \otimes (\vol_E)^{\otimes n}\end{equation}
	where $\rho_m$ is the $\U(n,\scf)$-invariant probability density on the Grassmannian and $\vol_E$ is the euclidean volume on $E$. 
	
	Observe that  \eqref{eq:inv_sec_dual} can be paired with any section $ \wt \kappa$ of the bundle over $\Gr_m(n,\scf)$ with fiber $\Dens(E^*)^{\otimes n}$ and integrated over $\Gr_m(n,\scf)$. One such section is $\wt \kappa(K)_E = (K\cap E)^{\otimes n}$, which yields  the identity
	$$ \int_{\Gr_{m}(n,\scf )} \langle \wt \sigma_m, \wt \kappa(K) \rangle = \wt \Phi_m^\scf (K).$$
	Since all bundles are equivariant, $\wt \kappa(gK) = g\cdot \wt\kappa(K)$, $g\cdot \wt\sigma_m=\wt\sigma_m$, and the pairing is compatible with the group action, the invariance of $\wt \Phi_m$ follows.

	The case of the affine quermassintegral is parallel to the former. 
	Let $\tau_m$ denote the line bundle over $\Gr_{n-m}(n,\scf)$ with fiber 
	$$\tau_m|_F=  \Dens(T_F \Gr_{n-m}(n,\scf))\otimes \Dens((V/F)^*)^{\otimes n}$$ 
	over $F$.  As before we obtain that the section 
	\begin{equation}\label{eq:inv_sec}  \sigma_m =  \rho_{n-m} \otimes (\vol_{V/F}^*)^{\otimes n}\end{equation}
	where $\rho_{n-m}$ is the $\U(n,\scf)$-invariant probability density on the Grassmannian and $\vol_{V/F}$ is the euclidean volume on $V/F\simeq F^\perp$, is $\SL(n,\scf)$-invariant. 
	
	Observe that  \eqref{eq:inv_sec} can be paired with any section $\kappa$ of the bundle over $\Gr_{n-m}(n,\scf)$ with fiber $\Dens(V/F)^{\otimes n}$ and integrated over $\Gr_{n-m}(n,\scf)$. One such section is $\kappa(K)_F = (\frac{1}{\vol_{V/F}(P_{V/F} K)} \vol_{V/F} )^{\otimes n}$.
	Thus 
	$$  \int_{\Gr_{n-m}(n,\scf)} \langle \sigma_m, \kappa(K) \rangle = \int_{\Gr_{n-m}(n,\scf)} |P_{F^\perp}K|^{-n} dF= \Phi_m^\scf(K).$$
	As before, the  invariance of $\Phi_m$ follows.	
\end{proof}

We close this section with a proof of Conjecture~\ref{conj} in one specific case. 

\begin{proposition}\label{prop:special}
	Let $K$  be a convex body in $\scf^n$ with non-empty interior. If $K$ is invariant under multiplication by scalars of unit length, then 
	$$ |K|^{-1} \geq \frac{(\kappa_{p})^n}{\kappa_{np}} \int_{\Gr_{1}(n,\scf)} |P_E K|^{-n} dE$$
	with equality if and only if $K$ is a (real, complex, or quaternionic) ellipsoid.
\end{proposition}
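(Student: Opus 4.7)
The plan is to reduce the inequality to the classical Blaschke--Santal\'o inequality for centrally symmetric convex bodies in $\RR^{np}$. Since $-1$ has unit norm in $\scf$, the hypothesis forces $-K=K$, so $K$ is centrally symmetric in $\RR^{np}$ with the origin in its interior. For each line $E\in \Gr_1(n,\scf)$, the projection $P_EK$ is invariant under multiplication by scalars of unit norm, hence is a convex balanced subset of $E\cong \RR^p$, i.e.\ a euclidean ball about the origin. Choosing a unit vector $u\in E$, we obtain
$$|P_EK|=\kappa_p\, h_K^{\scf}(u)^p,\qquad h_K^{\scf}(u):=\max_{x\in K}|(u,x)|,$$
where $(\cdot,\cdot)$ denotes the standard $\scf$-hermitian inner product; the scalar $h_K^{\scf}(u)$ depends only on the line $E=u\cdot\scf$.

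Combining the fact that the $\U(n,\scf)$-invariant probability measure on $\Gr_1(n,\scf)$ is the pushforward of the normalized surface measure on $S^{np-1}$ under $u\mapsto u\cdot\scf$ with the polar coordinate representation of the volume of the ``$\scf$-polar'' $K^{\circ}:=\{y\in\scf^n : |(y,x)|\leq 1 \text{ for every } x\in K\}$, whose $\RR^{np}$-radial function equals $1/h_K^{\scf}$, one computes
$$\int_{\Gr_1(n,\scf)}|P_EK|^{-n}\,dE = \frac{\kappa_p^{-n}}{\omega_{np}}\int_{S^{np-1}}h_K^{\scf}(u)^{-np}\,du = \frac{|K^{\circ}|}{\kappa_p^{\,n}\,\kappa_{np}},$$
so the required inequality is equivalent to the Blaschke--Santal\'o-type statement $|K|\cdot|K^{\circ}|\leq \kappa_{np}^{2}$.

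Next, the $\scf$-invariance of $K$ identifies $K^{\circ}$ with the ordinary euclidean polar $K^{*}$: for $y\in\scf^n$ and $x\in K$, the orbit $\{x\lambda : |\lambda|=1\}$ lies in $K$, and since $(y,x\lambda)=(y,x)\lambda$ one has
$$\sup_{|\lambda|=1}\langle y,x\lambda\rangle = \sup_{|\lambda|=1}\Re\bigl((y,x)\lambda\bigr)=|(y,x)|,$$
the supremum being attained at $\lambda=\overline{(y,x)}/|(y,x)|$ in both the complex and quaternionic settings. Hence $y\in K^{*}$ iff $y\in K^{\circ}$, and the inequality is precisely the classical Blaschke--Santal\'o inequality for the centrally symmetric body $K\subseteq\RR^{np}$. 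Its equality case forces $K$ to be a centered real ellipsoid; combined with $\scf$-invariance, every $\scf$-line intersects $K$ in a centered euclidean ball, so Theorems~\ref{thm-complex-ellipsoids} and~\ref{thm-quaternionic-ellipsoids} imply that $K$ is a complex (resp.\ quaternionic) ellipsoid whenever $\scf=\CC$ (resp.\ $\scf=\HH$), as claimed.

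The main obstacle is purely technical bookkeeping---matching the constants in the polar coordinate computation and carefully verifying the identification $K^{\circ}=K^{*}$ in the non-commutative quaternionic setting---rather than any new idea beyond Blaschke--Santal\'o and the ellipsoid characterizations of Section~3.
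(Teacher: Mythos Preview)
Your proposal is correct and follows essentially the same approach as the paper: both recognize that $P_EK$ is a euclidean ball of radius $h_K(u)$, rewrite the Grassmannian integral as $|K^*|/\kappa_{np}$ (up to the factor $\kappa_p^{-n}$), and then invoke the Blaschke--Santal\'o inequality. Your version is merely more explicit---you introduce $h_K^{\scf}$ and the $\scf$-polar $K^{\circ}$ separately before identifying them with the euclidean $h_K$ and $K^*$, and you spell out the equality case via Theorems~\ref{thm-complex-ellipsoids} and~\ref{thm-quaternionic-ellipsoids}, which the paper leaves implicit.
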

\begin{proof} 
	If $E$ is a line  through the origin in $\scf^n$ and $K$  is invariant under multiplication by scalars of unit length, then 
	$P_E K$ is a euclidean ball  of radius $h_K(u)$, where $u$ is any vector in $E$ of unit length. Hence
	\begin{align*}  (\kappa_{p})^n \int_{\Gr_{1}(n,\scf)} |P_E K|^{-n} dE & = \frac{1}{\omega_{np}}  \int_{S^{np-1}}  h_K(u)^{-np} du \\
		& = \frac{1}{np \kappa_{np}}  \int_{S^{np-1}}  \rho_{K^*}(u)^{np} du\\
		&  = \frac{1}{ \kappa_{np}} |K^*|,
	\end{align*} 
where $\rho_K$ denotes the radial function of $K$ and $K^*$ is the polar body of $K$. An application of the Blaschke--Santal\'o inequality yields the desired inequality.
\end{proof}

\bibliographystyle{abbrv}
\bibliography{papers}
\vspace{1.5 cm}
\noindent\\
\hspace*{1em}Christos Saroglou, Department of Mathematics,
University of Ioannina,
Ioannina, Greece, 45110\\
\hspace*{1em}E-mail address: csaroglou@uoi.gr \ \& \ christos.saroglou@gmail.com
\\
\\
\noindent 
\hspace*{1em}Thomas Wannerer, Friedrich-Schiller-Universit\"at Jena, Fakult\"at f\"ur Mathematik und Informatik, Institut f\"ur Mathematik, Ernst-Abbe-Platz 2, 07743 Jena, Germany\\
\hspace*{1em}E-mail address: thomas.wannerer@uni-jena.de 

\end{document}